\documentclass[a4paper,12pt,dvipdfmx]{article}
\setlength{\topmargin}{-30pt}
\setlength{\evensidemargin}{0cm}
\setlength{\oddsidemargin}{0cm}
\setlength{\textheight}{24.0cm}
\setlength{\textwidth}{16.0cm}

\usepackage{amsmath,amssymb,amsfonts,amscd,amsthm}
\usepackage{ascmac} 
\usepackage[mathscr]{eucal}
\usepackage{enumerate}
\usepackage{indentfirst} 
\usepackage{txfonts, textcomp}
\usepackage{graphicx,color}
\newtheorem{thm}{Theorem}[section]
\newtheorem{prop}[thm]{Proposition}

\newtheorem{dfn}[thm]{Definition}
\newtheorem{lem}[thm]{Lemma}

\numberwithin{equation}{section}
\allowdisplaybreaks[4]

\title{Dirac structures on the space of connections }
\author{
Yuji HIROTA\thanks{hirota@azabu-u.ac.jp}\\
Azabu University \and
Tosiaki KORI\thanks{kori@waseda.jp}\\
Waseda University 
}
\date{}
\begin{document}
\maketitle 

\begin{abstract}

We shall give a twisted Dirac structure on the space of irreducible connections on a \( SU(n)\)-bundle over a three-manifold, 
and give a family of twisted Dirac structures on the space  of irreducible connections on the trivial \( SU(n)\)-bundle 
over a four-manifold.
The twist is described by the Cartan 3-form on the space of connections.   It vanishes over the subspace of {\it  flat} connections.  
So the spaces of flat connections are endowed with ( non-twisted ) Dirac structures.    
The Dirac structure on the space of flat connections over the three-manifold is obtained as the boundary restriction of a corresponding Dirac structure over the four-manifold. 
We discuss also the action of the group of gauge transformations over these Dirac structures. 
\end{abstract}

\noindent {\bf Mathematics Subject Classification(2000)}: 
  53D30, 53D50, 58D50, 81R10, 81T50.\\
\noindent Subj. Class: Global analysis, Quantum field theory.

\noindent {\bf Keywords}   Dirac structures.   Symplectic structures.    Space of connections.

\section{Introduction}

Let \(X\) be a four-manifold with the boundary three-manifold \(M\).  
Let \(\mathcal{A}_X\) and \(\mathcal{A}_M\) be the spaces of irreducible connections on the principal bundles \(X\times SU(n)\) and \(M\times SU(n)\) respectively.    
In physics language these are spaces of gauge fields.   
We shall investigate twisted Dirac structures on the spaces  \(\mathcal{A}_X\) and \(\mathcal{A}_M\).   
These  twisted Dirac structures are affected by the presence of closed 3-forms.    
Twisted Poisson structures arose from the study of topological sigma models and play an important role in string theory \cite{KSopen97, Ptop01}.    
 P. \v Severa and A. Weinstein, \cite{SW}, investigated  how Poisson geometry on a manifold is affected by the presence of a closed 3-form, and 
they found that the notions of Courant algebroids and Dirac structures provide a framework 
where one can carry out computations in Poisson geometry in the presence of background 3-forms.   

In \cite{K} one of the author proved that there is a pre-symplectic structure on \(\mathcal{A}_X\) that is induced from the canonical symplectic structure 
on the cotangent bundle \(T^{\ast}\mathcal{A}_X\) by a generating function \(CS: \mathcal{A}_X\longrightarrow T^{\ast}\mathcal{A}_X\) which is given by the Chern-Simons form.   
Let \(\omega\) be the boundary reduction of this  pre-symplectic form to  \(\mathcal{A}_M\).    
\(\,\omega\) is no longer pre-symplectic but twisted by the Cartan 3-form on  \(\mathcal{A}_M\).     
 Associated to the 2-form \(\omega\), we have the correspondence: 
\[\omega_A: \,T_A\mathcal{A}_M\,\ni a \,\longmapsto \,\omega_A(a,\cdot\,) \in T^{\ast}_A\mathcal{A}_M\,,\]
  at each \(A\in\mathcal{A}_M\).   
Then we have the subbundle 
\[\mathcal{D}_M\,=\,\left\{\,a\oplus\omega_A(a)\in T\mathcal{A}_X\oplus T^{\ast}\mathcal{A}_X \mid a\in T_A\mathcal{A}_M\,, A\in \mathcal{A}_M\,\right\}.\]
\(\mathcal{D}_M\) gives a twisted Dirac structure of the standard Courant algebroid \(E_0(M)=T\mathcal{A}_M\oplus T^{\ast}\mathcal{A}_M\).    
The 3-form \(\kappa=\tilde{\rm d}\omega\) which describes the twist   
is given by the Cartan 3-form on \(\mathcal{A}_M\):
\[
\kappa_A(a,b,c)=\frac{1}{8\pi^3}\int_M\,{\rm tr}~[abc-bac]\,,\quad a, b, c\in T_A\mathcal{A}_M ,\, A\in \mathcal{A}_M,
\]
where \(abc=a\wedge b\wedge c\) etc..   

On the space of connections \(\mathcal{A}_X\) over a four-manifold \(X\) we consider the correspondence 
\[
\phi_A:\,T_A\mathcal{A}_X\,\ni a\,\longmapsto \,F_Aa+aF_A\,\in T^{\ast}_A\mathcal{A}_X\,,\quad A\in \mathcal{A}_X\,,
\]
where \(F_A\) is the curvature of \(A\).
Then we have the following subbundle of the standard Courant algebroid \(E_0(X)=T\mathcal{A}_X\oplus T^{\ast}\mathcal{A}_X\):
\[\mathcal{D}^{\,\phi}_X\,=\,\left\{\,a\oplus \phi_A(a)\in E_0(X)\mid a\in T_A\mathcal{A}_X\,, A\in \mathcal{A}_X\,\right\}.\]
\(\mathcal{D}_X^{\,\phi}\) gives a twisted Dirac subbundle of \(E_0(X)\).    
The twist in this case is given by the following 3-form \(\kappa\) on \(\mathcal{A}_X\):
\[
\kappa_A(a,b,c)=\kappa_A(\bar{a},\,\bar{b},\,\bar{c}\,),\quad a, b, c\in T_A\mathcal{A}_X,\, A\in \mathcal{A}_X \,,
\]
where \(\bar{a},\bar{b}\) and \(\bar{c} \) indicate the restriction of \(a, b\) and \(c\) respectively to the boundary \(M\), 
and the right-hand side is the 3-form \(\kappa\) on \(\mathcal{A}_M\) described above.  
 
\noindent Moreover if we deal with the correspondence from 
\(T_A\mathcal{A}_X\) to \(T^{\ast}_A\mathcal{A}_X\) given by 
\[
\gamma^{\,t}_A\,(a)\,=\,(F_A+\,t\,A^2)~a\,+\,a~(F_A+\,t\,A^2)\,,\quad \, t \in\mathbb{R}\,,
\]
 then we have the \(\kappa\)-twisted Dirac structure
\[
\mathcal{D}^{\,t}_X\,=\,\left\{\,a\oplus\gamma^{\,t}_A(a)\in E_0(X)\bigm| a\in T_A\mathcal{A}_X\,, A\in \mathcal{A}_X\,\right\}.
\]
 As for the pre-symplectic structure on the space of connections \(\mathcal{A}_X\) we have a family of closed 2-forms on \(\mathcal{A}_X\) given by 
\begin{equation*}
\Omega^{\,t}_A(a,b)=\,\frac{1}{24\pi^3}\int_X{\rm tr}\,
\Bigl[(ab-ba)\,\Bigl\{3F_A\,-(\,t-1)\,A^2\,\Bigr\} \Bigr] -\frac{1}{24\pi^3}\int_{ M}{\rm tr}~\Bigl[(ab-ba)\,A\Bigr]\,.
\end{equation*}
for \(a,\,b\in T_A\mathcal{A}_X\).   
\(\,\Omega^1\) is the pre-symplectic form discussed in \cite{K}.

If we restrict ourselves to the spaces of flat connections we will have (non-twisted) Dirac structures.    In fact we have the following Dirac structures
\begin{eqnarray*}
\mathcal{D}^{\flat}_M\,&=&\,\left\{\,a\oplus \omega_A(a)\in E_0(M)\mid a\in T_A\mathcal{A}^{\flat}_M\,,\,A\in \mathcal{A}^{\flat}_M\,\right\},\\[0.2cm]
\mathcal{D}^{\flat}_X\,&=&\,\left\{\,a\oplus \gamma^1_A(a)\in E_0(X)\mid a\in T_A\mathcal{A}^{\flat}_X\,,\,A\in \mathcal{A}^{\flat}_X\,\right\},
\end{eqnarray*}
where \(\mathcal{A}^{\flat}_M\) and \(\mathcal{A}^{\flat}_X \) are flat connections 
in \(\mathcal{A}_M\) and \(\mathcal{A}_X\)
respectively.
By the boundary restriction from \(X\) to \(M\) we have the correspondence \(r:\,\mathcal{A}_X^{\flat}\longrightarrow\,\mathcal{A}_M^{\flat}\) 
and we find that the image of  \(r\) consists of those flat connections \(A\in \mathcal{A}_M^{\flat}\) with degree \(0\,\): 
\(\deg\,A=\int_M\,{\rm tr}[\,A^3]=0\,\).
   Then \( \mathcal{D}^{\flat,\,\deg 0}_M\,=\{a\oplus \omega_A(a)\in \mathcal{D}^{\flat}_M\mid \deg A=0\,\}\) is a Dirac substructure of \(\,\mathcal{D}^{\flat}_M\,\) and 
there is an isomorphism of Dirac structures between 
\( \mathcal{D}^{\flat}_X\) and \( \mathcal{D}^{\flat,\,\deg 0}_M\) induced by the restriction to the boundary.   


\section{Preliminaries}

\subsection{ Differential calculus on the space of connections} 
Let \(M\) be a compact, connected and oriented \(m\)-dimensional Riemannian manifold  possibly with boundary \(\partial M\).    
Let $G$ be a compact Lie group with Lie algebra $\mathfrak{g}$.    We shall denote by $\varGamma(M, E)$ the space of 
smooth sections of a smooth vector bundle $E\to M$. Especially, 
if $E=TM$  we write $\mathrm{Vect}\,(M)$ for $\varGamma(M, TM)$, and  
if $E=\wedge^kT^*M$ we write $\Omega^k(M)$ for $\varGamma(M, \wedge^kT^*M)$. 
 $\Omega^k(M,E)$ denotes the space $\varGamma(M, \wedge^kT^*M\otimes E)$ of $E$-valued $k$-forms on $M$. 
 
  Let $P$ be a principal \(G\)-bundle on \(M\). A connection 1-form on \(P\) is a \(\mathfrak{g}\)-valued 1-form on \(P\) that is invariant under \(G\), acting by a combination of the action on \(P\) and the adjoint action on \(\mathfrak{g}\).       Let $\mathcal{A}_M$ be the space of irreducible connections on $P$. 
The space $\mathcal{A}_M$ is an affine space modeled on the vector space $\Omega^1(M, \mathrm{ad}\,P)$; the space of 1-forms with values in 
the adjoint bundle ${\rm ad}\,P\,$ over $M$.    For the trivial principal bundle \(P=M\times G\) it is merely the space of $\mathfrak{g}$-valued differential 1-forms on $M$.   
So the tangent space at \(A\in{\mathcal A}_M\) is
 \begin{equation*}
 T_A{\mathcal A}_M=\Omega^1(M,\mathfrak{g})\,.
 \end{equation*}
 The cotangent space at \(A\in\mathcal{A}_M\) is
  \begin{equation*}
 T_A^{\ast}{\cal A}_M=\Omega^{m-1}(M, \mathfrak{g})\,.
 \end{equation*}
The dual pairing of \( T_A{\cal A}_M=\,\Omega^{1}(M, \mathfrak{g})\) and \(T^{\ast}_A{\cal A}_M=\Omega^{m-1}(M, \mathfrak{g})\) is given by 
\[
\langle \alpha,\,a\rangle_A=\int_M\,{\rm tr}\,(\,\alpha\wedge a\,)\,,\quad a\in T_A{\cal A}_M\,,\,\alpha\in T^{\ast}_A{\cal A}_M\,. 
\] 
For a function $H=H(A)$ on $\mathcal{A}_M$ with values in a vector space \(V\),  
the directional derivative $(\partial_{a} H)(A)$ at $A\in \mathcal{A}_M$ to the direction $a\in T_A \mathcal{A}_M$ is defined by  
\[
(\partial_{a} H)\,(A) \coloneqq \lim_{t\to 0}\frac{1}{t}\bigl\{H(A+ta)-H(A)\bigr\}. 
\]
For example, the directional derivative of the identity map ${\rm Id}:A\mapsto A$ on $\mathcal{A}_M$ is 
$(\partial_{\boldsymbol{a}} \,\mathrm{Id})(A) = a$. 
The curvature 2-form of $A\in\mathcal{A}_M$; 
$
F_A = \mathrm{d}A + \frac{1}{2}[A\wedge A] = \mathrm{d}A + A\wedge A
$ 
is viewed as a function $F$ over $\mathcal{A}_M$ with values in $\Omega^2(M,\mathfrak{g})$.  
Since \(
F_{A+ta}-F_A
=t~(\mathrm{d}a + a\wedge A + A\wedge a) + t^2a\wedge a 
\)
we have $(\partial_{a} F)(A) = \mathrm{d}a + a\wedge A + A\wedge a$. 

Let $\boldsymbol{V}$ be a vector field over $\mathcal{A}_M$. The directional derivative $\partial_{a} \boldsymbol{V}$ of 
$\boldsymbol{V}$ to the direction $a$ at $A\in \mathcal{A}_M$
is defined by taking the directional derivative of the coefficients of $\boldsymbol{V}$. 
Namely, if $\boldsymbol{V}$ is written locally as $\boldsymbol{V}=\sum_i\xi_i\partial_i$ with coefficients $\xi_i$, the 
directional derivative $\partial_{\boldsymbol{a}}\boldsymbol{V}$ to the direction $a\in T_A\mathcal{A}_M$ is given by 
\[
(\partial_{a}\boldsymbol{V})(A)\coloneqq \sum_i(\partial_{a}\xi_i)(A)\,\partial_i.
\]
Then the Lie bracket of vector fields $\boldsymbol{V}$ and $\boldsymbol{W}$  on $\mathcal{A}_M$ is given by 
\begin{equation}\label{sec2:eqn_Lie bracket}
 [\boldsymbol{V},\,\boldsymbol{W}](A) =\, 
  (\partial_{\boldsymbol{V}(A)} \boldsymbol{W})(A) - (\partial_{\boldsymbol{W}(A)} \boldsymbol{V})(A)\,,\quad A\in\mathcal{A}_M\,.
\end{equation} 
Let $\theta$ be a $k$-form $(k\geqq 1)$ on the connection space $\mathcal{A}_M$ and $\boldsymbol{X}$ a vector field on $\mathcal{A}_M$. 
The directional derivative $\partial_{\boldsymbol{X}} \theta$ of $\theta$ to the direction of the vector field $\boldsymbol{X}$ is a $k$-form 
that is obtained by the directional derivative of the component functions of $\theta$ to the direction $\boldsymbol{X}(A)$ at each \(A\in \mathcal{A}_M$. 
That is, if $\theta$ is written locally in the form $\theta=\sum_i f_i\boldsymbol{\varepsilon}_i$ with 
component functions $f_i$ and local frames $\{\boldsymbol{\varepsilon}_i\}$ of cotangent bundle $T^*\mathcal{A}_M$, $\partial_{\boldsymbol{X}} \theta$ is given by
\[
(\partial_{\boldsymbol{X}} \theta)_A \coloneqq \sum_i(\partial_{\boldsymbol{X}(A)}f_i)(A)\,\boldsymbol{\varepsilon}_i.\] 
Let  $\langle\theta\mid \boldsymbol{V}\rangle$ denote the evaluation of a 1-form $\theta$ and a vector field $\boldsymbol{V}$.   Then 
it holds that 
\begin{equation}\label{sec2:eqn_evaluation}
\partial_{\boldsymbol{X}}\langle\theta\mid \boldsymbol{V}\rangle = \langle\partial_{\boldsymbol{X}}\theta\mid \boldsymbol{V}\rangle 
+ \langle\theta\mid \partial_{\boldsymbol{X}}\boldsymbol{V}\rangle.
\end{equation}
The exterior derivative of a $k$-form $\theta$ on $\mathcal{A}_M$ 
is the $(k+1)$-form $\,\tilde{\rm d}\theta$ that is given by 
\begin{align*}
 (\tilde{\rm d}\theta)_A\bigl(\boldsymbol{V}_1(A),&\cdots,\boldsymbol{V}_{k+1}(A)\bigr)\\
 &\coloneqq \sum_{i=1}^{k+1}(-1)^{i+1}\bigl(\partial_{\boldsymbol{V}_i} 
    \theta\,(\boldsymbol{V}_1,\cdots,\hat{\boldsymbol{V}_i},\cdots, \boldsymbol{V}_{k+1})\bigr)(A) \\
 &\quad + \sum_{i<j}(-1)^{i+j} \theta_A\,\bigl([\boldsymbol{V}_i,\,\boldsymbol{V}_j](A), \boldsymbol{V}_1(A),\cdots,
  \widehat{\boldsymbol{V}_i(A)},\cdots, \widehat{\boldsymbol{V}_j(A)},\cdots,\boldsymbol{V}_{k+1}(A)\bigr), 
\end{align*}
for any vector fields $\boldsymbol{V}_1,\cdots,\boldsymbol{V}_{k+1}$ on $\mathcal{A}_M$. It can be shown that $\tilde{\rm d}\circ\tilde{\rm d}=0$.    
In particular the exterior derivative of a 1-form $\theta$  becomes  
\begin{equation}\label{sec2:eq derivative}
 (\tilde{\rm d}\theta)(\boldsymbol{V}_1,\boldsymbol{V}_2) 
 = \langle\partial_{\boldsymbol{V}_1}\theta\mid \boldsymbol{V}_2\rangle - \langle\partial_{\boldsymbol{V}_2}\theta\mid \boldsymbol{V}_1\rangle 
\end{equation}
by (\ref{sec2:eqn_Lie bracket}) and (\ref{sec2:eqn_evaluation}).  
 The exterior derivative $\,\tilde{\rm d}\varphi$ of a 2-form $\varphi$ is given by
\begin{equation}\label{sec2:eq_derivative2}
 (\tilde{\rm d}\varphi)_A(\boldsymbol{V}_1,\boldsymbol{V}_2,\boldsymbol{V}_3) 
 = (\partial_{\boldsymbol{V}_1} \varphi)\,(\boldsymbol{V}_2,\, \boldsymbol{V}_3) 
 + (\partial_{\boldsymbol{V}_2} \varphi)\,(\boldsymbol{V}_3,\, \boldsymbol{V}_1) 
 + (\partial_{\boldsymbol{V}_3} \varphi)\,(\boldsymbol{V}_1,\, \boldsymbol{V}_2)\,. 
\end{equation}
The Lie derivative is also defined by the same manner as in the case of finite dimensional smooth manifolds.   
Let $\theta$ be a $k$-form and $\boldsymbol{V}$ be a vector field on $\mathcal{A}_M$, 
the Lie derivative $\mathcal{L}_{\boldsymbol{V}}\theta$ of $\theta$ by $\boldsymbol{V}$ is a $k$-form on $\mathcal{A}_M$ defined by 
\begin{align*}
(\mathcal{L}_{\boldsymbol{V}}\theta)_A\bigl(\boldsymbol{V}_1(A),\cdots,\boldsymbol{V}_k(A)\bigr)
&\coloneqq \bigl(\partial_{\boldsymbol{V}}\theta(\boldsymbol{V}_1,\cdots,\boldsymbol{V}_k)\bigr)(A) \\
&\qquad- \sum_{i}\theta\,\bigl([\boldsymbol{V},\boldsymbol{V}_i](A),\boldsymbol{V}_1(A),\cdots,\widehat{\boldsymbol{V}_i(A)}\cdots,\boldsymbol{V}_k(A)\bigr).
\end{align*}
Especially, for $k=1$, we have 
\begin{equation}\label{sec2:Lie der}
(\mathcal{L}_{\boldsymbol{V}}\theta)(\boldsymbol{W}) 
= \partial_{\boldsymbol{V}}\langle\theta\mid \boldsymbol{W}\rangle - \langle\,\theta\mid [\boldsymbol{V},\boldsymbol{W}\,]\,\rangle.
\end{equation}
For further details of differential calculus on Banach space we refer the readers to \cite{BNnote85, DKgeo90, Sgeo81}.

\subsection{Courant algebroids and  Dirac structures}

The notions of Courant algebroid and Dirac structure are developed  in many forms since T. Courant's work in 1990 \cite{Cdir90}.    
P. \v{S}evera and A. Weinstein 
showed that Courant algebroid and Dirac structure provide a framework to carry out computations in Poisson geometry in the presence of a background 3-form.    
Poisson structures on a manifold \(M\) may be identified with certain Dirac structures in the standard Courant algebroid \(E_0=TM\oplus T^{\ast}M\), 
and a closed 3-form \(\phi\) on \(M\) may be used to modify the bracket on \(E_0\), yielding a new Courant algebroid \(E_{\phi}\).    
Here we shall give a explanatory introduction of these subjects after \cite{Rcou, SW}.

\begin{dfn}~~
A Courant algebroid over a manifold \(M\) is a vector bundle \(E\longrightarrow\,M\) equipped with a field of nondegenerate symmetric bilinear forms 
\((\,\cdot\,,\,\cdot\,)\) on the fibers, an \(\,\mathbb{R}\)-bilinear bracket 
\(\,[\,\cdot\,,\,\cdot\,]\,:\,\varGamma(M,E)\,\times\,\varGamma(M,E)\,\longrightarrow\,\varGamma(M,E)\,\) on the space of sections on \(E\,\), 
and a bundle map \(\rho\,:\,E\longrightarrow\,TM\,\); the {\it anchor}, such that the following properties are satisfied{\rm :}
\begin{enumerate}[\quad \rm(1)]
\item for any
 \(\,e_1,\,e_2,\,e_3\,\in \,\varGamma(M,E)\,\), \(\, [e_1,\,[e_2,\,e_3]\,]\,=\,
[\,[ e_1,\,e_2],\,e_3\,]\,+\,[e_2 ,\,[e_1,\,e_3]\,]\,\){\rm ;}
\item
for any
 \(\,e_1,\,e_2\,\in \,\varGamma(M,E),\quad \rho\,([\,e_1,\,e_2\,])\,=\,[\,\rho(e_1),\,\rho(e_2)\,]\){\rm ;}
 \item
 for any
 \(\,e_1,\,e_2\,\in \,\varGamma(M,E)\,\) and \(f\in C^{\infty}(M)\,\), \([e_1,\,fe_2]\,=\,f[e_1,\,e_2]\,+\,(\rho(e_1)f)\,e_2\,\){\rm ;}
 \item
 for any \(e\,,\,h_1,\,h_2\,\in\,\varGamma(M,E)\,\), \(\rho(e)\,(\,h_1,\,h_2\,)\,=\,(\,[e,\,h_1],\,h_2\,)\,+\,(\,h_1,\,[e,\,h_2]\,)\,\){\rm ;}
 \item
 for any \(\,e\in\,\varGamma(M,E)\,\), \([\,e,\,e\,]\,=\,\mathbb{D}(\,e,\,e\,)\,\),
\end{enumerate}
where \(\mathbb{D}\,=\frac{1}{2}~\beta^{-1}\rho^{\ast}\mathrm{d}\) and \(\beta\) is the isomorphism between \(E\) 
and \(E^{\ast}\) given by the bilinear form{\rm :} \((\beta x)(y)=(x,\,y)\).   
That is, \((\mathbb{D}f,\,e\,)=\frac{1}{2}~\rho(e)f\).
\end{dfn}
The assertion 5 says that there is a linear map \(Z:\varGamma(E)\,\ni e \longrightarrow\,Z_e\in\,\mathrm{Vect}\,(E)\) 
such that \(Z_e\) is a lift of \(\rho(e)\in \mathrm{Vect}\,(M)\), and the first four axioms say that the flow of \(Z_e\) preserves the structure of \(E\).   
The bracket \([\,e_1,\,e_2\,]\) is the Lie derivative of \(e_2\) by \(Z_{e_1}\).

\begin{dfn}~~~
A {\it Dirac structure} in \(E\) is a maximal isotropic subbundle \(\,\mathcal{D}\,\) of \(\,E\) whose sections are closed under the bracket, i.e., 
which is preserved by the flow of \(Z_e\) for every \(e\in \varGamma(\mathcal{D})\).
\end{dfn}

The restriction of the bracket and anchor to any Dirac structure \(\mathcal{D}\) forms a Lie algebroid structure on \(\mathcal{D}\).   

On any manifold we have the standard Courant algebroid \(E_0=TM\oplus T^{\ast}M\) with bilinear form 
\(\left(\,X_1\oplus \xi_1\,,\,X_2\oplus \xi_2\,\right)\,=\,\frac{1}{2}\,\Bigl\{\xi_1(X_2)\,+\,\xi_2(X_1)\Bigr\}\,\), the anchor \(\,\rho\,(X\oplus \xi)=X\,\) 
and the bracket 
\begin{equation}\label{standardCourant}
\left[\,X_1\oplus \xi_1\,,\,X_2\oplus \xi_2\,\right]\,=\,[X_1\,,\,X_2\,]\oplus (\mathcal{L}_{X_1}\xi_2\,-\,i_{X_2}{\rm d}\xi_1)\,.
\end{equation}

Now let \(\phi\) be a 3-form on \(M\).   We define a new bracket on \(E_0\) by adding the term \(\phi(X_1,\,X_2\,,\cdot\,)\) to the right-hand side of 
 (\ref{standardCourant}):
 \begin{equation}\label{3formCourant}
\left[\,X_1\oplus \xi_1\,,\,X_2\oplus \xi_2\,\right]_{\phi}\,=\,[X_1\,,\,X_2\,]\oplus (\mathcal{L}_{X_1}\xi_2\,-\,i_{X_2}{\rm d}\xi_1\,-i_{X_1\wedge X_2}\phi)\,.
\end{equation}

A simple computation shows that the new bracket together 
with the original bilinear form and anchor constitute a Courant algebroid structure on 
\(E_0=TM\oplus T^{\ast}M\) if and only if \({\rm d}\phi=0\).    We denote this modified Courant algebroid by \(E_{\phi}\).    A 
maximal isotropic subbundle \(\mathcal{D}\) of \(E_{\phi}\) whose sections are closed under the bracket \(\left[\cdot\,,\,\cdot\right]_{\phi}\) is called 
a \(\phi\)-{\it twisted Dirac structure}.

We endow the following canonical skew symmetric form on the standard Courant algebroid \(E_0=TM\oplus T^{\ast}M\): 
\begin{equation}\label{standard2form}
\Lambda (a\oplus \alpha\,\mid \,b\oplus \beta)\coloneqq \frac{1}{2}\,\bigl\{\,\langle \alpha\mid b\rangle - \langle \beta\mid a\rangle\,\bigr\} ,
\end{equation}
for $a\oplus \alpha, b\oplus\beta\in E_0$.

\section{Dirac structures on the space of connections}

We shall introduce several Dirac structures on the space of connections over the manifolds of dimension 3 and 4.    First we give an explanation about the related pre-symplectic structures.

\subsection{Pre-symplectic structures on the space of connections}

 Let \(X\) be a four-manifold with the boundary three-manifold \(M\).  
Let \(\mathcal{A}_X\) and \(\mathcal{A}_M\) be the spaces of irreducible connections on the principal bundles \(X\times SU(n)\) and \(M\times SU(n)\) respectively.      
The symplectic structure on the space of connections over a Riemann surface was introduced in 1983 
by M. Atiyah and L. Bott in their study of the geometry and topology of moduli spaces of gauge fields~(see \cite{AByang83}).    
In \cite{K}, we introduced a pre-symplectic structure on the space \({\cal A}_X\) of irreducible connections over a four-manifold \(X\).   
That was given by the 2-form: 
\begin{equation}\label{presympform}
\Omega_A(a,b)=\,\frac{1}{8\pi^3}\int_X{\rm tr}\,[(ab-ba)F_A ] -\frac{1}{24\pi^3}\int_{M}{\rm tr}~[(ab-ba)A]
\end{equation}
for \(a,b\in T_A{\cal A}_X\simeq \Omega^1(X, \mathfrak{g})\).    
We abbreviate often the exterior product of differential forms \(a\wedge b\) to \(ab\,\).   
Let \(\theta\) be the the canonical 1-form on the cotangent bundle \(T^{\ast}{\cal A}_X\), 
and let  \(\sigma=\tilde{\rm d}\theta\) be the canonical 2-form.    
A 1-form  \(\varphi\)  on \({\cal A}_X\) gives a tautological section of the cotangent bundle \(T^{\ast}{\cal A}_X\) so that the pullback 
\(\theta^{\,\varphi}\) of \(\theta\) by \(\varphi\) becomes \(\varphi\) itself:  \(\theta^{\varphi}=\varphi\), 
this is the characteristic property of the canonical 1-form \(\theta\).   
The pullback  \(\sigma^{\varphi}\) of the canonical 2-form \(\sigma\)  by \(\varphi\) is a closed 2-form on \({\cal A}_X\).    
In particular if we take the 1-form on \({\cal A}_X\) given by 
\begin{equation}\label{CS}
CS(A)=\,q~\biggl(AF_A+F_AA-\frac12 A^3\biggr)\,,
\end{equation}
where \(q=\frac{1}{24\pi^3}\), 
then we see that the pullback \(\Omega=\sigma^{CS}\) is given by the equation (\ref{presympform})~(the notation \(CS\) comes from the Chern-Simons function).   
 Thus, for a four-manifold \(X\) there is a pre-symplectic form on \({\cal A}_X\) 
that is induced from the canonical symplectic form on the cotangent bundle \(T^{\ast}{\cal A}_X\) 
by the generating function \(CS\,:{\cal A}_X \longrightarrow T^{\ast}{\cal A}_X\).    
   The quantity 
\(\int_M\,{\rm tr}[A^3]\) for a connection \(A\in\mathcal{A}_X\) plays an analogous role of winding number.     In fact, when \(A\) is a pure gauge; \(A=f^{-1}\tilde{d}f\), it is equal to the degree of \(f\).
We shall change the ratio of counting this number and look for the family of pre-symplectic structures  affected by it.

Put 
\[\Theta^{\,t}_A(a)=q\int_X\,{\rm tr}\,\left[\biggl(AF_A+F_AA-\frac{t}{2}A^3\biggr)\,a\right]\,,\quad (\,t \in \mathbb{R}).\]
By the same observation as above the pullback of the canonical 2-form \(\sigma\) on the cotangent space \(T^{\ast}\mathcal{A}_X\) 
by the generating function \(\Theta^{\,t}\) provides a pre-symplectic structure \(\Omega^{\,t}\) on \(\mathcal{A}_X\).  
    It holds that 
\begin{align*}
\Omega^{\,t}_A(a,b)
&= (\tilde{\rm d}\,\Theta^{\,t})_A(a,b)
\,=\,\langle (\,\partial_{{a}}\Theta^{\,t})_A\mid b\,\rangle\,-\, \langle (\partial_{b}\Theta^{\,t})_A\mid a\,\rangle\,\\
&= q\int_X
\,{\rm tr}\,\Bigl[2(ab-ba)\,F_A \,-\,t\,(ab-ba)\,A^2 \\
&\hspace{100pt} - ({\rm d}_Aa\wedge b-\,a\wedge {\rm d}_Ab\,-\,{\rm d}_Ab\wedge a\,+\,b\wedge {\rm d}_Aa)\wedge A \Bigr]\\
&=\,\,q\int_X
\,{\rm tr}\,[3(ab-ba)\,F_A\,-(\,t-1)(ab-ba)\,A^2]\,-\,q\int_M\,{\rm tr}\,[(ab-ba)\,A] \,,
\end{align*}
because 
\begin{align*}
&{\rm tr}\,\Bigl[
-({\rm d}_Aa\wedge b-\,a\wedge {\rm d}_Ab\,-\,{\rm d}_Ab\wedge a\,+\,b\wedge {\rm d}_Aa)\wedge A \Bigr]\\
=&\,
{\rm tr}\,\Bigl[(ab-ba)\,F_A+(ab-ba)A^2\Bigr]\,
 \,-\,{\rm d}\,{\rm tr}\,\Bigl[(ab-ba)A\Bigr]\,.
\end{align*}

\begin{thm}~~~
We have a family of pre-symplectic structures on the space $\mathcal{A}_X\,$ parametrized by \(t\in \mathbb{R}\){\rm :}
\begin{equation*}
\,\Omega^{\,t}_A(a,b)\,=
\,\Omega_A(a,b)\,-\,(t-1)\, \gamma^{\,\prime}_A(a,\,b) \,,
\end{equation*}
with \(\gamma^{\,\prime}_A(a,\,b)=\,q\int_X\,{\rm tr}\,[(ab-ba)\,A^2]\).
\end{thm}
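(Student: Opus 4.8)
The statement has two halves: that each $\Omega^{t}$ is pre-symplectic (i.e. closed), and the explicit decomposition $\Omega^{t}=\Omega-(t-1)\gamma^{\prime}$. The second half is essentially the computation already displayed just before the theorem, so the plan is to justify that computation carefully and then read off the decomposition; the first half is an immediate consequence of the construction of $\Omega^{t}$.

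For closedness I would argue, exactly as for $\Omega$ in \cite{K}, that $\Theta^{t}$ is a tautological section of $T^{\ast}\mathcal{A}_X$: by the characteristic property of the canonical 1-form $\theta$, its pullback along $\Theta^{t}$ is $\theta^{\Theta^{t}}=\Theta^{t}$, so the pullback of the canonical 2-form $\sigma=\tilde{\rm d}\theta$ is $\Omega^{t}=(\Theta^{t})^{\ast}\sigma=\tilde{\rm d}\Theta^{t}$. Since $\tilde{\rm d}\circ\tilde{\rm d}=0$ on $\mathcal{A}_X$, the 2-form $\Omega^{t}$ is exact, hence closed, hence pre-symplectic.

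Next I would compute $\tilde{\rm d}\Theta^{t}$ from (\ref{sec2:eq derivative}). Since $\mathcal{A}_X$ is affine I use the constant vector fields $\boldsymbol a,\boldsymbol b$, for which $[\boldsymbol a,\boldsymbol b]=0$ and $\partial_{\boldsymbol b}\langle\Theta^{t}\mid\boldsymbol a\rangle=\langle\partial_{\boldsymbol b}\Theta^{t}\mid\boldsymbol a\rangle$; inserting $(\partial_{b}F)(A)={\rm d}_Ab$, $(\partial_{b}{\rm Id})(A)=b$ and the Leibniz rule applied to the three factors of $A^{3}$, and using the graded cyclicity of ${\rm tr}$ with the pairing $\langle\alpha,a\rangle_A=\int_X{\rm tr}(\alpha\wedge a)$, the quantity $\langle\partial_{a}\Theta^{t}\mid b\rangle-\langle\partial_{b}\Theta^{t}\mid a\rangle$ reduces to the first integral displayed before the theorem. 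The only genuinely delicate step is then the auxiliary identity quoted after ``because'': I would expand ${\rm d}\,{\rm tr}[(ab-ba)A]$ by the graded Leibniz rule, substitute ${\rm d}A=F_A-A^{2}$ and ${\rm d}a={\rm d}_Aa-aA-Aa$ (and likewise for $b$), and collect terms; the terms carrying a ${\rm d}_A$ reassemble into $-({\rm d}_Aa\wedge b-a\wedge{\rm d}_Ab-{\rm d}_Ab\wedge a+b\wedge{\rm d}_Aa)\wedge A$, while the remaining terms, under ${\rm tr}(\alpha\wedge\beta)=(-1)^{|\alpha|\,|\beta|}{\rm tr}(\beta\wedge\alpha)$, produce exactly ${\rm tr}[(ab-ba)F_A+(ab-ba)A^{2}]$ modulo the exact term ${\rm d}\,{\rm tr}[(ab-ba)A]$, which is the stated identity. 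Feeding this back and invoking Stokes' theorem with $\partial X=M$ gives
\[
\Omega^{t}_A(a,b)=q\int_X{\rm tr}\,[\,3(ab-ba)F_A-(t-1)(ab-ba)A^{2}\,]-q\int_{M}{\rm tr}\,[(ab-ba)A].
\]
Finally, since $q=\frac{1}{24\pi^3}$ gives $3q=\frac{1}{8\pi^3}$, the terms free of $t-1$ are exactly $\Omega_A(a,b)$ of (\ref{presympform}) and the remainder is $-(t-1)q\int_X{\rm tr}[(ab-ba)A^{2}]=-(t-1)\gamma^{\prime}_A(a,b)$, which is the claim; specializing $t=1$ recovers $\Omega^{1}=\Omega$. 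The main obstacle is the sign bookkeeping in the graded Leibniz/cyclicity step; everything else is routine.
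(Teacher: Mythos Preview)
Your proposal is correct and follows essentially the same route as the paper: the paper's proof \emph{is} the displayed computation of $\tilde{\rm d}\Theta^{t}$ immediately preceding the theorem, together with the auxiliary trace identity, and you have reproduced that computation with fuller justification of each step (constant vector fields, directional derivatives of $F$ and $A^3$, Stokes, and matching constants). Your explicit remark that closedness follows from $\Omega^{t}=\tilde{\rm d}\Theta^{t}$ and $\tilde{\rm d}^2=0$ is a useful addition, since the paper asserts the pre-symplectic property without spelling this out; the only infelicity is the phrase ``modulo the exact term ${\rm d}\,{\rm tr}[(ab-ba)A]$'', which is circular as written (you are computing that very quantity), but the intended rearrangement is clear.
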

The case for $t=1$ was discussed in \cite{K}. 

 Every principal \(G\)-bundle over a three-manifold \(M\) is extended to a 
principal \(G\)-bundle over a four-manifold \(X\) that cobords \(M\), and for a connection \(A\in {\cal A}_M\) 
there is a connection \(\mathbf{A}\in {\cal A}_X\) that extends \(A\).    
So we detect  a pre-symplectic structure on  \({\cal A}_M\) as the boundary restriction of the pre-symplectic structure \(\Omega\) on \({\cal A}_X\).
   The boundary 2-form $\omega$ is given by  
\begin{equation}\label{presympform2}
\omega_A(a,b) \coloneqq \frac{1}{24\pi^3}\int_M \mathrm{tr}\,
[(a b - b a) A], \quad A\in\mathcal{A}_M,\,a, b\in T_A\mathcal{A}_M\,.
\end{equation}
 But it is not a closed form and does not give a presymplectic structure on $\mathcal{A}_M$.     
Instead it gives the following 3-form on $\mathcal{A}_M$ that is called {\it the Cartan 3-form}.    
Let 
 $\kappa$ be the 3-form on $\mathcal{A}_M$ defined  by 
\begin{equation}\label{kappa}
\kappa_A(a,b,c) 
 = \frac{1}{8\pi^3}\int_M\mathrm{tr}\,
 [(a b - b a) \,c\,] 
\end{equation} for any $a,b,c\in T_A\mathcal{A}_M$.    It holds that 
$\kappa_A(a,b,c)=\kappa_A(b,c,a)=\kappa_A(c,a,b)$.   We have then 
\begin{equation}
\tilde{\rm d}\omega=\kappa\,.\end{equation}  
 In fact,  (\ref{sec2:eq_derivative2}) yields 
\begin{align*}
&(\tilde{\rm d}\omega)_A(a,b,c) \\
=&\;  (\partial_a\omega)_A\,(b,\, c) + (\partial_b \omega)_A\,(c,\, a) + (\partial_c \omega)_A\,(a,\, b) \\
=&\; 
\frac{1}{24\pi^3}\,\int_M\mathrm{tr}\,[(b\wedge c-c\wedge b)\wedge a] + 
\frac{1}{24\pi^3}\,\int_M\mathrm{tr}\,[(c\wedge a- a\wedge c)\wedge b] \\
&\hspace{180pt} + \frac{1}{24\pi^3}\,\int_M\mathrm{tr}\,[(a\wedge b- b\wedge a)\wedge c] \\ 
=&\; \frac{1}{8\pi^3}\,\int_M\mathrm{tr}\,[(a\wedge b - b\wedge a)\wedge c]. 
\end{align*}
 The 2-form (\ref{presympform2}) gives a pre-symplectic structure twisted by the Cartan 3-form  
\(\,\kappa\).

In the next section we shall describe twisted Dirac structures that are  associated to the twisted pre-symplectic structures discussed above.

\subsection{ Twisted Dirac structure on the space of connections on a 3-manifold $M$}

In this section $M$ is a compact connected oriented Riemannian 3-manifold and $G=S\!U(n)$ with $n\geq 2$.    We shall write  \(\mathfrak{g}= \mathfrak{su}(n)\).   
Let \(P\) be a \(G\)-principal bundle.   Any principal bundle over a 3-manifold has 
a trivialization so that we may assume \(P=M\times\,G\).     
Let \(\Omega^r_s(M,\,\mathfrak{g})\) be the Sobolev space of \(L^2_s\)-sections of \(\mathfrak{g}\)-valued $r$-forms on \(M\).    
It is the completion of \(\{\,\phi\otimes\,X\,|\,\phi\in \Omega^r_s(M),\,X\in \,\mathfrak{g}\,\}\) 
by the \(L^2_s\)-norms.  $\Omega_{s-1}^1(M,\mathfrak{g})$ is a Hilbert space by the inner product 
\begin{equation}\label{sec3:eqn_inner product}
 (\xi\mid \eta) \coloneqq \int_M\langle\xi,\, \eta \rangle\,\mathrm{vol}_M
\end{equation}
for any $\xi,\eta \in \Omega_{s-1}^1(M,\mathfrak{g})$. Here, ${\rm vol}_M$ stands for a volume form on $M$, and $\langle\cdot, \cdot\rangle\,$ is  
the inner product induced from the Riemannian metric on $M$ and the Killing form on $\mathfrak{g}$.

We write \({\cal A}_M\) the space of irreducible connections over \(P\).   \({\cal A}_M\) is an affine space modeled by the vector space \(\Omega^1_{s-1}(M,\,\mathfrak{g})\).   
So the tangent space at \(A\in{\cal A}_M\) is
 \begin{equation*}
 T_A{\cal A}_M=\Omega^1_{s-1}(M, \mathfrak{g})\,, 
 \end{equation*}
that is, the space of differential 1-forms 
with values in $\mathfrak{su}(n)$ whose component functions are of class $L^2_{s-1}$.    
The dual pairing of a 2-form $\alpha\in\Omega_{s-1}^2(M,\mathfrak{g})$ and a 1-form $a\in \Omega_{s-1}^1(M,\mathfrak{g})$ is given by 
\begin{equation*}\label{sec3:evaluation}
\langle \alpha\mid a \rangle_M \coloneqq q\int_M\mathrm{tr}\,(\alpha \wedge a),\quad q=\frac{1}{24\pi^3},
\end{equation*}
which yields the identification of the cotangent space $T^*_A\mathcal{A}_M$ and  $\Omega_{s-1}^2(M,\mathfrak{g})$. 
\medskip

Let \(E_0(M)=\mathbb{T}\mathcal{A}_M\coloneqq T\mathcal{A}_M \oplus T^*\mathcal{A}_M\,\) be the standard Courant algebroid  with the bilinear form 
\begin{equation}\label{sec3:pairing}
\langle \boldsymbol{a}\oplus \alpha,\,\boldsymbol{b}\oplus\beta \rangle_+\coloneqq \frac{1}{2}\,
\bigl\{\,\langle\alpha\mid \boldsymbol{b}\,\rangle_M + \langle\,\beta\mid \boldsymbol{a}\,\rangle_M\,\bigr\}\,,
\end{equation}
the anchor \(\,\rho(\boldsymbol{a}\oplus\alpha)=\boldsymbol{a}\,\), and the bracket 
$\,\llbracket\cdot,\,\cdot\rrbracket\,$ for sections defined by
\begin{equation}\label{sec3:Dorfman}
\llbracket\boldsymbol{a}\oplus\alpha,\,\boldsymbol{b}\oplus\beta\rrbracket\coloneqq 
[\boldsymbol{a},\boldsymbol{b}]\oplus \bigl(\mathcal{L}_{\boldsymbol{a}}\beta-i_{\boldsymbol{b}}\tilde{\rm d}\alpha 
\bigr). 
\end{equation}

We take  the Cartan 3-form  \(\kappa\) on \(\mathcal{A}_M \) of (\ref{kappa}).    
Since \(\kappa=\tilde{\rm d}\omega\) is a closed 2-form we have the Courant algebroid \(E_{\kappa}(M)\) modified by the bracket
\begin{equation}\label{kappaCbracket}
\llbracket\boldsymbol{a}\oplus\alpha,\,\boldsymbol{b}\oplus\beta\rrbracket_{\kappa}\coloneqq 
[\boldsymbol{a},\boldsymbol{b}]\oplus \bigl(\mathcal{L}_{\boldsymbol{a}}\beta-i_{\boldsymbol{b}}\tilde{\rm d}\alpha \,-\,i_{{\boldsymbol{a}}\wedge{\boldsymbol{b}}}\kappa\,
\bigr). 
\end{equation}
Note that the bilinear form and the anchor of \(E_{\kappa}(M)\) are the same ones as for the standard Courant algebroid \(E_0(M)\).

We define a vector subbundle  $ \mathcal{D}_M$  of $E_{\kappa}(M)$  
 by 
\begin{equation}\label{sec3:dfn of D_M}
 \mathcal{D}_M:=\coprod_{A\in\mathcal{A}_M}
  \Bigl\{\,a\oplus (A\wedge a-a\wedge A)\,\bigm|\,a\in T_A\mathcal{A}_M \,\Bigr\}.
\end{equation}

Let \(\omega:\, T\mathcal{A}_M\,\longrightarrow\,T^{\ast}\mathcal{A}_M\,\) be a bundle homomorphism given by 
\begin{equation}\label{sec3:omega}
\omega_A(a)\coloneqq A\wedge a - a\wedge A\,,\quad a\in T_A\mathcal{A}_M\,,\,A\in \mathcal{A}_M.  
\end{equation}
  Any vector field $\boldsymbol{a}:\mathcal{A}_M\to T\mathcal{A}_M$ gives rise to a section $\tilde{\boldsymbol{a}}$ of $\mathcal{D}_M$ by 
$\,\tilde{\boldsymbol{a}}=\boldsymbol{a}\oplus \omega(\boldsymbol{a})$,  where \(
 \omega(\boldsymbol{a})\) denotes the section of \(T^{\ast}\mathcal{A}_M\): $ \omega(\boldsymbol{a})(A)=\omega_A(\boldsymbol{a}(A))$.   
 
 \begin{lem}\label{isotrop}~~~
 $\langle \tilde{\boldsymbol{a}},\tilde{\boldsymbol{b}}\rangle_+=0\,$ for any section $\,\tilde{\boldsymbol{a}},\tilde{\boldsymbol{b}}\,$ of $\,\mathcal{D}_M$. 
 \end{lem}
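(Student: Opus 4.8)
The plan is to reduce the claim to the graded cyclic invariance of the trace on $\mathfrak{g}$-valued $1$-forms. First I would unwind the definitions. For vector fields $\boldsymbol{a},\boldsymbol{b}$ on $\mathcal{A}_M$ the associated sections of $\mathcal{D}_M$ are $\tilde{\boldsymbol{a}}=\boldsymbol{a}\oplus\omega(\boldsymbol{a})$ and $\tilde{\boldsymbol{b}}=\boldsymbol{b}\oplus\omega(\boldsymbol{b})$, so the bilinear form (\ref{sec3:pairing}) gives
\[
\langle\tilde{\boldsymbol{a}},\tilde{\boldsymbol{b}}\rangle_+
=\tfrac12\bigl\{\langle\omega(\boldsymbol{a})\mid\boldsymbol{b}\rangle_M+\langle\omega(\boldsymbol{b})\mid\boldsymbol{a}\rangle_M\bigr\}.
\]
Evaluating at a point $A\in\mathcal{A}_M$ and writing $a=\boldsymbol{a}(A)$, $b=\boldsymbol{b}(A)$, the definition (\ref{sec3:omega}) of $\omega$ together with the dual pairing turns each summand into an integral over $M$ of a trace of a wedge product of three $\mathfrak{g}$-valued $1$-forms:
\[
\langle\omega_A(a)\mid b\rangle_M=q\int_M\mathrm{tr}\bigl[(A\wedge a-a\wedge A)\wedge b\bigr].
\]

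Next I would expand the sum of the two pairings and use that, for $\mathfrak{g}$-valued $1$-forms $\alpha,\beta,\gamma$, one has $\mathrm{tr}(\alpha\wedge\beta\wedge\gamma)=\mathrm{tr}(\gamma\wedge\alpha\wedge\beta)$: moving $\gamma$ past the $2$-form $\alpha\wedge\beta$ costs the Koszul sign $(-1)^{1\cdot2}=+1$ and the matrix trace is cyclic. Hence $\mathrm{tr}(A a b)=\mathrm{tr}(b A a)$ and $\mathrm{tr}(a A b)=\mathrm{tr}(A b a)$, so pointwise on $M$
\[
\mathrm{tr}\bigl[(Aa-aA)\wedge b+(Ab-bA)\wedge a\bigr]
=\mathrm{tr}(Aab)-\mathrm{tr}(aAb)+\mathrm{tr}(Aba)-\mathrm{tr}(bAa)=0,
\]
and integrating over $M$ yields $\langle\tilde{\boldsymbol{a}},\tilde{\boldsymbol{b}}\rangle_+=0$.

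Conceptually this is just the observation that the same cancellation shows $\langle\omega_A(a)\mid b\rangle_M=-\langle\omega_A(b)\mid a\rangle_M$, and in fact $\langle\omega_A(a)\mid b\rangle_M=\omega_A(a,b)$ with $\omega_A(a,b)$ the $2$-form (\ref{presympform2}); thus $\mathcal{D}_M$ is the graph of a genuine $2$-form, and the graph of any $2$-form is automatically isotropic for $\langle\cdot,\cdot\rangle_+$. I do not expect a real obstacle here: the only point to watch is the Koszul sign in the graded cyclicity of the trace, which is harmless because all three factors have odd degree $1$ and $1\cdot2$ is even; on the $3$-manifold $M$ the integrand is a top-degree form, so the integral is well defined and no Stokes-type boundary term enters.
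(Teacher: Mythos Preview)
Your proof is correct and follows essentially the same route as the paper: unwind the pairing, write both summands as integrals of $\mathrm{tr}[(Aa-aA)\wedge b]$ and $\mathrm{tr}[(Ab-bA)\wedge a]$, and cancel using the graded cyclicity of the trace on three $\mathfrak{g}$-valued $1$-forms. The paper's proof simply writes out the two integrals and declares the sum to be zero, leaving the cyclicity implicit; your version spells that step out and adds the (equivalent) observation that the graph of any $2$-form is isotropic.
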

 
In fact, for $a=\boldsymbol{a}(A),\,b=\boldsymbol{b}(A)\,$, we have 
\begin{align*}
\langle \boldsymbol{a}\oplus\omega(\boldsymbol{a}),\,\boldsymbol{b}\oplus\omega(\boldsymbol{b})\rangle_+ (A)&= 
\frac{1}{2}\,\bigl\{\,\langle\omega_A(a)\mid b\rangle_M + \langle\omega_A(b)\mid a\rangle_M\,\bigr\}\\
&= \frac{q}{2}\int_M{\rm tr}\,\bigl[(A\wedge a-a\wedge A)\wedge b\bigr] + \frac{1}{2}\int_M{\rm tr}\,\bigl[(A\wedge b-b\wedge A)\wedge a\bigr]\\
&= 0.
\end{align*}

The 3-form \(\kappa\) which describes the twist on \(E_0(M)\) 
is defined by
\[
\kappa_A(a,b,c)=\frac{1}{8\pi^3}\int_M\,{\rm tr}~[abc-bac]\,,\quad a, b, c\in T_A\mathcal{A}_M ,\, A\in \mathcal{A}_M,
\]
where \(abc=a\wedge b\wedge c\) etc..   

\begin{lem}\label{Liebracket}~~~
The space of sections $\varGamma(\mathcal{A}_M,\,\mathcal{D}_M)$ of $\mathcal{D}_M$ is closed under the bracket (\ref{kappaCbracket}): 
\begin{equation*}
\llbracket \varGamma(\mathcal{A}_M,\mathcal{D}_M),\,\varGamma(\mathcal{A}_M,\mathcal{D}_M)\rrbracket_{\kappa}\,\subset \varGamma(\mathcal{A}_M,\mathcal{D}_M)\,.
\end{equation*}
\end{lem}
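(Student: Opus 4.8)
The plan is to compute $\llbracket \tilde{\boldsymbol a},\,\tilde{\boldsymbol b}\rrbracket_\kappa$ explicitly for two arbitrary sections $\tilde{\boldsymbol a}=\boldsymbol a\oplus\omega(\boldsymbol a)$ and $\tilde{\boldsymbol b}=\boldsymbol b\oplus\omega(\boldsymbol b)$ of $\mathcal D_M$, and to show that its $T^\ast\mathcal A_M$-component equals $\omega_A\bigl([\boldsymbol a,\boldsymbol b](A)\bigr)$, so that the bracket again lies in $\mathcal D_M$. By the definition (\ref{kappaCbracket}), the $T\mathcal A_M$-component is automatically $[\boldsymbol a,\boldsymbol b]$, so the whole content is the identity
\[
\mathcal L_{\boldsymbol a}\,\omega(\boldsymbol b)\;-\;i_{\boldsymbol b}\,\tilde{\rm d}\,\omega(\boldsymbol a)\;-\;i_{\boldsymbol a\wedge\boldsymbol b}\,\kappa\;=\;\omega\bigl([\boldsymbol a,\boldsymbol b]\bigr)
\]
as a section of $T^\ast\mathcal A_M$. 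I would evaluate both sides on an arbitrary vector field $\boldsymbol c$ and reduce everything to integrals over $M$ of traces of wedge products of $a=\boldsymbol a(A)$, $b=\boldsymbol b(A)$, $c=\boldsymbol c(A)$, $A$, together with derivative terms coming from the directional derivatives $\partial_{\boldsymbol a}\boldsymbol b$, etc.

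The key computational inputs are already assembled in the Preliminaries: formula (\ref{sec2:Lie der}) expresses $(\mathcal L_{\boldsymbol a}\theta)(\boldsymbol c)=\partial_{\boldsymbol a}\langle\theta\mid\boldsymbol c\rangle-\langle\theta\mid[\boldsymbol a,\boldsymbol c]\rangle$; formula (\ref{sec2:eq derivative}) gives $(\tilde{\rm d}\theta)(\boldsymbol a,\boldsymbol c)=\langle\partial_{\boldsymbol a}\theta\mid\boldsymbol c\rangle-\langle\partial_{\boldsymbol c}\theta\mid\boldsymbol a\rangle$; and since $\omega$ is an affine (degree-one in $A$) bundle map, $(\partial_{\boldsymbol c}\,\omega(\boldsymbol a))_A = \omega_A(\partial_{\boldsymbol c}\boldsymbol a) + (c\wedge a - a\wedge c)$, the second term being the derivative hitting the explicit $A$ in $\omega_A(a)=A\wedge a-a\wedge A$. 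Substituting these and using the pairing $\langle\alpha\mid c\rangle_M=q\int_M{\rm tr}(\alpha\wedge c)$, every term involving $\partial_{\boldsymbol a}\boldsymbol b$, $\partial_{\boldsymbol b}\boldsymbol a$, $\partial_{\boldsymbol a}\boldsymbol c$, $\partial_{\boldsymbol b}\boldsymbol c$ should either cancel in pairs or reassemble, via (\ref{sec2:eqn_Lie bracket}), into the single term $\langle\omega_A([\boldsymbol a,\boldsymbol b])\mid c\rangle_M$. What remains are the purely algebraic terms — integrals of ${\rm tr}$ of triple wedge products $abc$ and their permutations with no $A$ — and these are exactly what the twist term $-i_{\boldsymbol a\wedge\boldsymbol b}\kappa$, evaluated on $\boldsymbol c$, is designed to absorb, using the cyclic symmetry $\kappa_A(a,b,c)=\kappa_A(b,c,a)=\kappa_A(c,a,b)$ recorded after (\ref{kappa}) and the graded-commutativity of traces of wedge products of odd-degree forms.

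I expect the main obstacle to be the bookkeeping of signs in the purely algebraic $abc$-terms: the forms $a,b,c,A$ are all $1$-forms on a $3$-manifold, so under the trace ${\rm tr}(a\wedge b)=-{\rm tr}(b\wedge a)$ and ${\rm tr}(a\wedge b\wedge c)={\rm tr}(b\wedge c\wedge a)$, and one must carefully track which of the contributions to $\mathcal L_{\boldsymbol a}\omega(\boldsymbol b)$ and $i_{\boldsymbol b}\tilde{\rm d}\,\omega(\boldsymbol a)$ produce such triple products (they come from the $\partial$ hitting the explicit $A$) and verify that their sum is precisely $i_{\boldsymbol a\wedge\boldsymbol b}\kappa$ evaluated on $\boldsymbol c$, i.e. $\kappa_A(a,b,c)=\tfrac{1}{8\pi^3}\int_M{\rm tr}[abc-bac]$. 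A useful sanity check along the way is Lemma \ref{isotrop} (isotropy of $\mathcal D_M$) together with the fact, established in the previous subsection, that $\tilde{\rm d}\omega=\kappa$: these guarantee that the closedness computation is consistent, since a twisted Dirac structure of $E_\kappa(M)$ is exactly a maximal isotropic subbundle whose defining $2$-form has exterior derivative equal to the twisting $3$-form. Once the $T^\ast$-component is identified with $\omega_A([\boldsymbol a,\boldsymbol b](A))$, the section $\llbracket\tilde{\boldsymbol a},\tilde{\boldsymbol b}\rrbracket_\kappa = [\boldsymbol a,\boldsymbol b]\oplus\omega([\boldsymbol a,\boldsymbol b])$ manifestly lies in $\varGamma(\mathcal A_M,\mathcal D_M)$, and since $\mathcal D_M$ is a $C^\infty(\mathcal A_M)$-submodule, closedness under the bracket follows for all sections, completing the proof.
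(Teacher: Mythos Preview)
Your proposal is correct and follows essentially the same route as the paper's proof: reduce to the identity $\mathcal L_{\boldsymbol a}\omega(\boldsymbol b)-i_{\boldsymbol b}\tilde{\rm d}\omega(\boldsymbol a)-i_{\boldsymbol a\wedge\boldsymbol b}\kappa=\omega([\boldsymbol a,\boldsymbol b])$, evaluate both sides on a test vector field $\boldsymbol c$ using the formulas (\ref{sec2:Lie der}), (\ref{sec2:eq derivative}), (\ref{sec2:eqn_evaluation}), and separate the derivative terms (which recombine into $\langle\omega_A([\boldsymbol a,\boldsymbol b])\mid c\rangle_M$) from the purely algebraic triple-product terms (which sum to $\kappa_A(a,b,c)$). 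The paper carries out exactly this computation; your remark about the $C^\infty(\mathcal A_M)$-submodule is unnecessary since every section of $\mathcal D_M$ is already of the form $\boldsymbol a\oplus\omega(\boldsymbol a)$.
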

\begin{proof}~~~
It suffices to show that 
\[
\mathcal{L}_{\boldsymbol{a}}\omega(\boldsymbol{b})-i_{\boldsymbol{b}}\tilde{\rm d}\omega(\boldsymbol{a}) 
-i_{\boldsymbol{a}\wedge\boldsymbol{b}}\kappa = \omega([\boldsymbol{a},\boldsymbol{b}])
\]
for any vector field $\boldsymbol{a},\boldsymbol{b}$ on $\mathcal{A}_M$.   
 Let $\boldsymbol{a},\boldsymbol{b}$ and $\boldsymbol{c}$ be vector fields on $\mathcal{A}_M$ and put $a=\boldsymbol{a}(A),\,b=\boldsymbol{b}(A)$ and $c=\boldsymbol{c}(A)$. 
From (\ref{sec2:Lie der}) and (\ref{sec2:eqn_evaluation})
\begin{align}\label{sec3:computation1}
 &\; (\mathcal{L}_{\boldsymbol{a}}\omega(\boldsymbol{b}))_A(\boldsymbol{c}(A)) 
=\; \mathcal{L}_{\boldsymbol{a}}(A\wedge b-b\wedge A)(c) \notag \\
=&\; \partial_a\langle A\wedge b-b\wedge A\mid c\rangle_M(A) - 
     \langle A\wedge b- b\wedge A\mid [\boldsymbol{a},\boldsymbol{c}](A) \rangle_M \notag\\
=&\; \bigl\langle a\wedge b + A\wedge(\partial_a b)-(\partial_a b)\wedge A - b\wedge a \mid c\bigr\rangle_M 
 +\bigl\langle A\wedge b - b\wedge A\mid \partial_c\,a\bigr\rangle_M .
\end{align}
On the other hand, we see from the formula (\ref{sec2:eq derivative}) that $\,i_{\boldsymbol{b}} \tilde{\rm d}\omega(\boldsymbol{a})(\boldsymbol{c})$ is equal to  
\begin{align}\label{sec3:computation2}
 &\; \tilde{\rm d}(\omega(\boldsymbol{a}))_A(b,\,c) 
=\; \langle \partial_b\omega_A(a)\mid c\rangle_M - \langle \partial_c\omega_A(a)\mid b\rangle_M \notag \\
=&\; \bigl\langle b\wedge a + A\wedge (\partial_b a) 
     - (\partial_b a)\wedge A - a\wedge b\mid c\bigr\rangle_M \notag \\
&\qquad\qquad - \bigl\langle c\wedge a + A\wedge (\partial_c a) 
     - (\partial_c a)\wedge A - a\wedge c\mid b\bigr\rangle_M \notag \\
=&\; \bigl\langle b\wedge a + A\wedge (\partial_b a) - (\partial_b a)\wedge A - a\wedge b\mid c\bigr\rangle_M \notag \\
&\qquad \qquad -\bigl\langle a\wedge b-b\wedge a\mid c\bigr\rangle_M -\bigl\langle b\wedge A-A\wedge b \mid \partial_c a\bigr\rangle_M .
\end{align}
By Eqs.(\ref{sec3:computation1}) and (\ref{sec3:computation2}),
\begin{align*}
  & \mathcal{L}_{\boldsymbol{a}}(\omega(\boldsymbol{b}))_A(c) - \tilde{\rm d}(\omega(\boldsymbol{a}))_A(b,\,c)\notag \\
 =&\; \bigl\langle A\wedge (\partial_a b) -(\partial_b a)\wedge A 
   -A\wedge(\partial_b a) + (\partial_b a)\wedge A\mid c \bigr\rangle_M + 3\langle a\wedge b -b\wedge a\mid c\rangle_M \notag \\
 =&\; \langle A\wedge [a,b] - [a,b]\wedge A\mid c\rangle_M 
  + 3\,\langle a\wedge b - b\wedge a\mid c\rangle_M \notag \\
=&\; \langle \omega([\boldsymbol{a},\boldsymbol{b}])\mid c\rangle_A + (i_{\boldsymbol{a}\wedge\boldsymbol{b}}\kappa)_A(c),
\end{align*}
which proves the lemma. 
\end{proof}

The definition of twisted Dirac structure in (\ref{3formCourant}) together with Lemmas \ref{isotrop} and \ref{Liebracket} yield the following theorem.

\begin{thm}~~~
 $\mathcal{D}_M$ is a $\kappa$-twisted Dirac structure on $\mathcal{A}_M$. 
\end{thm}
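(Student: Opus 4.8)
The plan is to verify the three conditions in the definition of a $\kappa$-twisted Dirac structure (the one recalled after (\ref{3formCourant})) for the subbundle $\mathcal{D}_M$: that it is a smooth subbundle of $E_\kappa(M)$, that it is \emph{maximal} isotropic for the bilinear form $\langle\,\cdot\,,\,\cdot\,\rangle_+$, and that its space of sections is closed under $\llbracket\,\cdot\,,\,\cdot\,\rrbracket_\kappa$. The last of these is precisely Lemma~\ref{Liebracket}, and isotropy is Lemma~\ref{isotrop}, so the only genuinely new input needed is maximality, together with the essentially formal subbundle property.

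First I would record that, by (\ref{sec3:dfn of D_M})--(\ref{sec3:omega}), $\mathcal{D}_M$ is the graph of the fibrewise linear bundle map $\omega\colon T\mathcal{A}_M\to T^*\mathcal{A}_M$, $a\mapsto A\wedge a-a\wedge A$. Since $\omega_A$ depends affinely on $A$ and linearly on $a$ (and the relevant products of Sobolev forms remain of class $L^2_{s-1}$ for $s$ large enough), $\omega$ is a smooth bundle map, so its graph $\mathcal{D}_M$ is a smooth vector subbundle of $E_\kappa(M)$, carried isomorphically onto $T\mathcal{A}_M$ by the anchor $\rho$. I would also note that the computation in the proof of Lemma~\ref{isotrop} yields more than isotropy: it shows $\omega_A$ is skew with respect to the pairing,
\[
\langle\omega_A(a)\mid b\rangle_M+\langle\omega_A(b)\mid a\rangle_M=0,\qquad a,b\in T_A\mathcal{A}_M.
\]

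For maximality I would argue fibrewise. Fix $A\in\mathcal{A}_M$ and suppose $b\oplus\beta\in T_A\mathcal{A}_M\oplus T^*_A\mathcal{A}_M$ is orthogonal to the whole fibre $(\mathcal{D}_M)_A$, i.e. $\langle\beta\mid a\rangle_M+\langle\omega_A(a)\mid b\rangle_M=0$ for every $a\in T_A\mathcal{A}_M$. Using the skew-symmetry just recorded, this reads $\langle\beta-\omega_A(b)\mid a\rangle_M=0$ for all $a\in\Omega^1_{s-1}(M,\mathfrak{g})$; and since the dual pairing $\langle\gamma\mid a\rangle_M=q\int_M\mathrm{tr}(\gamma\wedge a)$ between $T^*_A\mathcal{A}_M=\Omega^2_{s-1}(M,\mathfrak{g})$ and $T_A\mathcal{A}_M=\Omega^1_{s-1}(M,\mathfrak{g})$ is non-degenerate — which one sees, for instance, by composing with the Hodge-star isomorphism $\Omega^2_{s-1}(M,\mathfrak{g})\cong\Omega^1_{s-1}(M,\mathfrak{g})$ and invoking non-degeneracy of the inner product (\ref{sec3:eqn_inner product}) — we conclude $\beta=\omega_A(b)$, so $b\oplus\beta\in(\mathcal{D}_M)_A$. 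Hence $(\mathcal{D}_M)_A^{\perp}\subseteq(\mathcal{D}_M)_A$, which together with Lemma~\ref{isotrop} gives $(\mathcal{D}_M)_A^{\perp}=(\mathcal{D}_M)_A$, i.e. $\mathcal{D}_M$ is maximal isotropic. Combining with Lemma~\ref{Liebracket} (closedness under the twisted bracket) then gives the theorem. The only delicate point is the maximality step, where the finite-dimensional ``half-dimension'' count is replaced by the non-degeneracy of the Sobolev $\Omega^2$--$\Omega^1$ pairing; the rest is bookkeeping built on the two lemmas already established.
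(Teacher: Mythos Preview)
Your proof is correct and follows the paper's approach: the paper simply invokes Lemmas~\ref{isotrop} and~\ref{Liebracket} together with the definition of twisted Dirac structure, exactly as you do. You actually go further than the paper by spelling out the maximality of the isotropic subbundle (via non-degeneracy of the $\Omega^2$--$\Omega^1$ pairing applied to the graph of the skew map $\omega_A$), a point the paper leaves tacit; this is a welcome addition, since the definition does require maximal isotropy and Lemma~\ref{isotrop} alone only gives isotropy.
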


\subsection{Dirac structures on the space of connections on a 4-manifold $X$ }

\subsubsection{ The twisted Dirac structure induced from the curvature form }

Let $X$ be a four-manifold with the boundary $\partial X=M$ which may be empty. We denote by $\mathcal{A}_X$ the connection space for the trivial bundle 
$X\times G$ over $X$ with \(G=S\!U(n)\).   We denote \(\mathfrak{g}=\mathfrak{su}(n)\) for  $n\geqq 2$. 
The tangent space $T_A\mathcal{A}_X$ at  $A\in\mathcal{A}_X$ is identified with the space $\Omega_{s-\frac12}^1(X,\,\mathfrak{g})$ 
of $\mathfrak{g}$-valued 1-forms, and the cotangent space $T_A^*\mathcal{A}_X$ is identified with 
$\Omega_{s-\frac12}^3(X,\,\mathfrak{g}\,)$.   The pairing of \(T_A\mathcal{A}_X\) and 
$T_A^*\mathcal{A}_X$ is given by 
\[
\bigl\langle \alpha\mid a\bigr\rangle_X \coloneqq \frac{1}{8\pi^3}\int_X{\rm tr}\,(\alpha\wedge a)\,,
\quad\mbox{ for }\,
\alpha\in T_A^*\mathcal{A}_X\,,\, \,a\in T_A\mathcal{A}_X\, .
\]

Let $E_0(X)\,=\mathbb{T}\mathcal{A}_X$ be the standard Courant algebroid over \(\mathcal{A}_X\).     We define a twist  \(\kappa\) on the Courant algebroid \(E_0(X)\)  by the following 3-form:
\begin{equation}\label{def4kappa}
\kappa_A(a,b,c)=\kappa_A(\bar{a},\,\bar{b},\,\bar{c}\,),\quad \mbox{for \(\,a, b, c\in T_A\mathcal{A}_X\,,\,A\in \mathcal{A}_X \)\,,}
\end{equation}
where \(\bar{a},\bar{b}\) and \(\bar{c} \) indicate the restriction of \(a, b\) and \(c\) respectively to the boundary \(M\), 
and the right-hand side is the 3-form \(\kappa\) on \(\mathcal{A}_M\) described in 3.2.    

\begin{lem}\label{sec4:lem01} ~~ $\kappa$ can be represented in the form
\begin{align*}
&\kappa_A(a,b,c)
=
\langle \,{\rm d}_Aa\wedge b\,+\,b\wedge\,{\rm d}_Aa\,\mid c\rangle_X \,
+\,\langle\, {\rm d}_Ab\wedge c+\,c\wedge {\rm d}_Ab\,\mid\,a\rangle_X\,\,+\,\langle\,{\rm d}_Ac\wedge\,a\,+a\wedge {\rm d}_Ac\mid\,b\rangle_X\,,
\end{align*}
for \(\,a,\,b,\,c\in T_A\mathcal{A}_X\,\).
\end{lem}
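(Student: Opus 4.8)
The plan is to reduce the four-dimensional identity to the three-dimensional one already built into the definition \eqref{def4kappa} of $\kappa$ on $\mathcal{A}_X$, and then to recognize the right-hand side as a boundary integral. First I would observe that, by \eqref{def4kappa}, $\kappa_A(a,b,c)=\kappa_A(\bar a,\bar b,\bar c)=\frac{1}{8\pi^3}\int_M {\rm tr}\,[\bar a\bar b\bar c-\bar b\bar a\bar c]$, so the whole task is to show that the sum of three $X$-pairings on the right collapses, via Stokes, to exactly this $M$-integral. Concretely, using the definition of $\langle\cdot\mid\cdot\rangle_X$, the right-hand side equals $\frac{1}{8\pi^3}\int_X {\rm tr}$ applied to the 4-form
\[
\Sigma:={\rm d}_Aa\wedge b\wedge c+b\wedge{\rm d}_Aa\wedge c+{\rm d}_Ab\wedge c\wedge a+c\wedge{\rm d}_Ab\wedge a+{\rm d}_Ac\wedge a\wedge b+a\wedge{\rm d}_Ac\wedge b,
\]
and I want to show $\int_X{\rm tr}\,\Sigma=\int_M{\rm tr}\,[abc-bac]$.

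The key computational step is a Leibniz-type identity. For $\mathfrak{g}$-valued 1-forms $a,b,c$ on $X$ one has
\[
{\rm d}\,{\rm tr}\,(a\wedge b\wedge c)={\rm tr}\,({\rm d}a\wedge b\wedge c)-{\rm tr}\,(a\wedge{\rm d}b\wedge c)+{\rm tr}\,(a\wedge b\wedge{\rm d}c),
\]
and when ${\rm d}$ is replaced by ${\rm d}_A=\,{\rm d}+[A,\,\cdot\,]$ the extra commutator terms collect into ${\rm tr}\,([A\wedge a\wedge b\wedge c])$-type expressions that cancel in pairs by the cyclicity of the trace on the degree-$(1,1,1,1)$ piece; so in fact ${\rm d}\,{\rm tr}\,(abc)={\rm tr}\,({\rm d}_Aa\,bc)-{\rm tr}\,(a\,{\rm d}_Ab\,c)+{\rm tr}\,(ab\,{\rm d}_Ac)$ as well, because $\sum_{\rm cyc}{\rm tr}\,([A,a]\,bc)$ with the appropriate signs vanishes. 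I would then symmetrize: writing out ${\rm d}\,{\rm tr}\,(abc)$, ${\rm d}\,{\rm tr}\,(bca)$, ${\rm d}\,{\rm tr}\,(cab)$ — all three equal, being traces of cyclic permutations of the same 3-form — and likewise the three derivatives of ${\rm tr}\,(bac)$ and its cyclic images, and combining them with the right signs, I expect exactly ${\rm tr}\,\Sigma$ to emerge as ${\rm d}\,{\rm tr}\,[abc-bac]$ (up to the overall normalization), possibly after reorganizing the six summands in $\Sigma$ using the graded-commutativity $b\wedge{\rm d}_Aa\wedge c={\rm d}_Aa\wedge c\wedge b$ etc.\ for forms of degrees $2$ and $1$. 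Stokes' theorem then gives $\int_X{\rm tr}\,\Sigma=\int_M{\rm tr}\,[\bar a\bar b\bar c-\bar b\bar a\bar c]=8\pi^3\,\kappa_A(a,b,c)$, which is the claim.

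The main obstacle I anticipate is purely bookkeeping: getting all the signs right when reorganizing the six terms of $\Sigma$ into a total ${\rm d}_A$-derivative, since ${\rm d}_A a$ is a 2-form and moving it past a 1-form costs no sign while moving a 1-form past a 1-form does, and one must also verify that the $[A,\cdot]$-contributions genuinely cancel rather than merely being assumed to. A clean way to organize this is to first prove the scalar identity ${\rm d}\,{\rm tr}\,(a\wedge b\wedge c)={\rm tr}\,({\rm d}_Aa\wedge b\wedge c)+{\rm tr}\,(b\wedge{\rm d}_Ac\wedge a)+{\rm tr}\,(c\wedge{\rm d}_Ab\wedge a)+\cdots$ in a form already symmetric enough that summing over the $S_3$-orbit of $(a,b,c)$ and subtracting the orbit of $(b,a,c)$ produces $\Sigma$ on the nose; once that lemma is in hand the rest is one application of Stokes and the definition \eqref{def4kappa}.
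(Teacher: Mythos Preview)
Your approach is correct and is essentially the paper's own: apply Stokes to write $\kappa_A(a,b,c)=\frac{1}{8\pi^3}\int_X d\,\mathrm{tr}[(ab-ba)c]$, then expand the integrand via the Leibniz rule with $d_A$ in place of $d$ (the $[A,\cdot]$ contributions cancelling under the trace, exactly as you observe) and regroup into the three $X$-pairings. The paper carries this out in a single direct expansion of $d\,\mathrm{tr}[(ab-ba)c]$, so your proposed $S_3$-symmetrization is unnecessary extra bookkeeping, but the content is the same.
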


\begin{proof}~~~ By the Stokes theorem 
\[
\kappa_A(a,b,c) = \,\frac{1}{8\pi^3}\int_M\,\mathrm{tr}\,[\,(ab-ba)\wedge c]\,= 
\,\frac{1}{8\pi^3}\int_X\,{\rm d}\,\mathrm{tr}\,[\,(ab-ba)\wedge c]\,\,.
\]
Since
\begin{eqnarray}
&&\,{\rm d}\,{\rm tr}\,[(ab-ba)c] \nonumber \\
&=&
{\rm tr}\,[(\,{\rm d}_Aa\wedge b-\,a\wedge {\rm d}_Ab\,-\,{\rm d}_Ab\wedge a\,+\,b\wedge {\rm d}_Aa
\,\,)\wedge c \,+\,(\,a\wedge b-b\wedge a\,)\wedge \,{\rm d}_Ac\,] \nonumber \\
&=&
{\rm tr}\,[({\rm d}_Aa\wedge b+\,b\wedge {\rm d}_Aa)\wedge c]\,
\,-{\rm tr}\,[ (\,a\wedge {\rm d}_Ab\,+\,{\rm d}_Ab\wedge a\,)\wedge c] \nonumber \\
&\,&\qquad  +\,{\rm tr}\,[(\,a\wedge b-b\wedge a\,)\wedge \,{\rm d}_Ac], \label{prlemma3.4}
\end{eqnarray} 
the lemma is proved.
\end{proof}

Let $\phi:T\mathcal{A}_X\to T^*\mathcal{A}_X$ be a bundle homomorphism defined by 
\begin{equation}\label{sec3:eqn_bundle homomorphism}
\phi_A(a)\coloneqq F_A\wedge a+a\wedge F_A\,, \quad \mbox{ for }\,a\in T_A\mathcal{A}_X\,,\,A\in \mathcal{A}_X\,,
\end{equation}
where \(F_A\) is the curvature form of \(A\in\mathcal{A}_X\).
Let $ \mathcal{D}^\phi_X$ be the vector subbundle of $E_0(X)\,$  defined by
\begin{equation}
 \mathcal{D}^\phi_X \coloneqq \coprod_{A\in\mathcal{A}_X}
  \Bigl\{\,a\oplus \phi_A(a)\,\bigm|\,a\in T_A\mathcal{A}_X \,\Bigr\}. 
\end{equation}
We shall show that $ \mathcal{D}^\phi_X$ gives a $\kappa$-twisted Dirac structure on $\mathcal{A}_X\,$.  

 First, for  vector fields $\boldsymbol{a}$ and $\boldsymbol{b}$ on $\mathcal{A}_X$,  we have  
\begin{align*}
\langle \phi(\boldsymbol{a})\mid \boldsymbol{b}\rangle_X(A) &= \frac{1}{8\pi^3}\int_X \mathrm{tr}\,\bigl(\phi_A(a)\wedge b\bigr)
= \frac{1}{8\pi^3}\int_X\mathrm{tr}\,(F_A\wedge a\wedge b+a\wedge F_A\wedge b)\\
&= \frac{1}{8\pi^3}\int_X\mathrm{tr}\,[(a\wedge b- b\wedge a)\wedge F_A]\,,
\end{align*}
where $a=\boldsymbol{a}(A)$ and $b=\boldsymbol{b}(A)$.   Then we have the following:
\begin{prop}\label{sec4:prop01}
It holds that 
 \begin{equation}
 \langle \phi(\boldsymbol{a})\mid \boldsymbol{b}\rangle_X+\langle \phi(\boldsymbol{b})\mid \boldsymbol{a}\rangle_X = 0
 \end{equation}
 for any vector fields 
$\boldsymbol{a},\boldsymbol{b}$ on $\mathcal{A}_X\,$.
\end{prop}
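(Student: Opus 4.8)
The plan is to read the statement off the formula established immediately before it. The computation preceding the proposition already gives, for vector fields $\boldsymbol{a},\boldsymbol{b}$ on $\mathcal{A}_X$ with $a=\boldsymbol{a}(A)$ and $b=\boldsymbol{b}(A)$,
\[
\langle \phi(\boldsymbol{a})\mid \boldsymbol{b}\rangle_X(A)=\frac{1}{8\pi^3}\int_X\mathrm{tr}\,[(a\wedge b-b\wedge a)\wedge F_A]\,.
\]
The right-hand side changes sign under the interchange $\boldsymbol{a}\leftrightarrow\boldsymbol{b}$, so $\langle \phi(\boldsymbol{b})\mid \boldsymbol{a}\rangle_X(A)=-\langle \phi(\boldsymbol{a})\mid \boldsymbol{b}\rangle_X(A)$, and adding the two expressions the integrands cancel pointwise on $\mathcal{A}_X$. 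Thus the proposition is essentially immediate once that formula is in hand, and there is no genuine obstacle.

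The one step worth spelling out is the derivation of that formula, i.e. why $\mathrm{tr}\,(\phi_A(a)\wedge b)=\mathrm{tr}\,[(a\wedge b-b\wedge a)\wedge F_A]$. I would expand $\phi_A(a)\wedge b=F_A\wedge a\wedge b+a\wedge F_A\wedge b$ and apply the graded cyclicity of the trace of $\mathfrak{g}$-valued forms, $\mathrm{tr}\,(\mu\wedge\nu)=(-1)^{|\mu|\,|\nu|}\mathrm{tr}\,(\nu\wedge\mu)$, on the $4$-manifold $X$: since $F_A$ has degree $2$ and $a,b$ have degree $1$, one gets $\mathrm{tr}\,(F_A\wedge a\wedge b)=\mathrm{tr}\,(a\wedge b\wedge F_A)$ (moving the degree-$2$ factor $F_A$ across the degree-$2$ product $a\wedge b$, an even permutation), while $\mathrm{tr}\,(a\wedge F_A\wedge b)=-\mathrm{tr}\,(b\wedge a\wedge F_A)$ (moving the degree-$1$ factor $b$ across the remaining degree-$3$ product, an odd permutation). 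Subtracting gives the stated expression, and the analogous identity with $a,b$ interchanged follows verbatim.

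Finally I would record the structural meaning: with the bilinear form $\langle a\oplus\alpha,\,b\oplus\beta\rangle_+=\tfrac12\{\langle\alpha\mid b\rangle_X+\langle\beta\mid a\rangle_X\}$ on $E_0(X)$, the proposition says precisely that $\langle a\oplus\phi_A(a),\,b\oplus\phi_A(b)\rangle_+=0$ for all $a,b\in T_A\mathcal{A}_X$, i.e. that $\mathcal{D}^\phi_X$ is isotropic in $E_0(X)$. This is the first of the three properties — isotropy, maximality, and closure under the twisted bracket $\llbracket\cdot,\cdot\rrbracket_\kappa$ — needed to conclude that $\mathcal{D}^\phi_X$ is a $\kappa$-twisted Dirac structure; maximality will come from the graph-of-a-bundle-map form of $\mathcal{D}^\phi_X$, and closure is the substantive computation to be carried out in the following lemmas (using Lemma \ref{sec4:lem01} to handle the $i_{\boldsymbol{a}\wedge\boldsymbol{b}}\kappa$ term).
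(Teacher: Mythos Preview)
Your argument is correct and matches the paper's own treatment exactly: the paper computes $\langle \phi(\boldsymbol{a})\mid \boldsymbol{b}\rangle_X(A)=\frac{1}{8\pi^3}\int_X\mathrm{tr}\,[(a\wedge b-b\wedge a)\wedge F_A]$ immediately before stating the proposition and leaves the antisymmetry as evident, which is precisely what you do (with more detail on the graded trace cyclicity and the structural interpretation than the paper itself supplies). One trivial slip: where you write ``subtracting gives the stated expression'' you mean \emph{adding}, since $\phi_A(a)\wedge b$ is the sum $F_A\wedge a\wedge b + a\wedge F_A\wedge b$; the conclusion is unaffected.
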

Next, we shall show  that the sections of \( \mathcal{D}^\phi_X \) is closed under the bracket \(\llbracket \cdot\,,\,\cdot\rrbracket_{\kappa}\).

\begin{lem}\label{sec4:lem02}
Let $\boldsymbol{a},\boldsymbol{b}$ and $\boldsymbol{c}$ be vector fields on $\mathcal{A}_X$
and put $a=\boldsymbol{a}(A),b=\boldsymbol{b}(A),\,c=\boldsymbol{c}(A)$ for $A\in \mathcal{A}_X$. 
We have the following formulas{\rm :}
\begin{equation}\label{Lie4}
(\mathcal{L}_{\boldsymbol{a}}\phi(\boldsymbol{b}))_A(c)
=\bigl\langle \phi_A(\partial_{a}\boldsymbol{b})\mid c\bigr\rangle_X 
+ \bigl\langle \mathrm{d}_Aa\wedge b + b\wedge \mathrm{d}_Aa\mid c\bigr\rangle_X + \bigl\langle \phi_A(b)\mid (\partial_{c}\boldsymbol{a})(A)\bigr\rangle_X,
\end{equation}
and
\begin{align}\label{idorf}
\bigl(i_{\boldsymbol{b}}{\tilde{\rm d}}\phi(\boldsymbol{a})\bigr)_A(c) 
&= \bigl\langle \phi_A(\partial_{b}\boldsymbol{a})\mid c\bigr\rangle_X + \bigl\langle \phi_A(b)\mid (\partial_c\boldsymbol{a})(A)\bigr\rangle_X \notag \\[0.2cm]
&\qquad -\,\langle\, {\rm d}_Ab\wedge c\,+ \,c\wedge {\rm d}_Ab\,\mid\,a\rangle_X\,
- \bigl\langle a\wedge {\rm d}_Ac\,+\, {\rm d}_Ac\wedge a  \mid b \bigr\rangle_X. 
\end{align}
\end{lem}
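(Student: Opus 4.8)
The plan is to prove both formulas by direct computation, unwinding the definitions of the Lie derivative and exterior derivative of $T^*\mathcal{A}_X$-valued objects given in the Preliminaries, and using the curvature variation formula $(\partial_a F)(A)=\mathrm{d}_Aa$ together with Lemma~\ref{sec4:lem01}.

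For the first formula~(\ref{Lie4}), I would start from the $k=1$ Lie derivative identity~(\ref{sec2:Lie der}), namely
$(\mathcal{L}_{\boldsymbol{a}}\phi(\boldsymbol{b}))_A(c)=\partial_{\boldsymbol{a}}\langle\phi(\boldsymbol{b})\mid\boldsymbol{c}\rangle_X-\langle\phi(\boldsymbol{b})\mid[\boldsymbol{a},\boldsymbol{c}]\rangle_X$.
Expanding $\langle\phi(\boldsymbol{b})\mid\boldsymbol{c}\rangle_X(A)=\frac{1}{8\pi^3}\int_X\mathrm{tr}\,(F_A\wedge b\wedge c+b\wedge F_A\wedge c)$ and applying the directional derivative $\partial_a$, the Leibniz rule distributes the derivative across the three factors $F_A$, $b$, $c$. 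The $F_A$-term produces $\langle\mathrm{d}_Aa\wedge b+b\wedge\mathrm{d}_Aa\mid c\rangle_X$ (since $\partial_a F=\mathrm{d}_Aa$); the $b$-term produces $\langle\phi_A(\partial_a\boldsymbol{b})\mid c\rangle_X$; the $c$-term produces $\langle\phi_A(b)\mid\partial_a\boldsymbol{c}\rangle_X$. Now using $[\boldsymbol{a},\boldsymbol{c}]=\partial_{\boldsymbol{a}}\boldsymbol{c}-\partial_{\boldsymbol{c}}\boldsymbol{a}$ from~(\ref{sec2:eqn_Lie bracket}), the subtracted term $-\langle\phi(\boldsymbol{b})\mid[\boldsymbol{a},\boldsymbol{c}]\rangle_X$ cancels the $c$-term $\langle\phi_A(b)\mid\partial_a\boldsymbol{c}\rangle_X$ and leaves $+\langle\phi_A(b)\mid\partial_c\boldsymbol{a}\rangle_X$, giving exactly~(\ref{Lie4}). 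The one subtlety to watch is the skew-symmetry $\langle\phi_A(b)\mid\partial_c\boldsymbol{a}\rangle_X$ versus $\langle\phi_A(\partial_c\boldsymbol{a})\mid b\rangle_X$, which differ by a sign by Proposition~\ref{sec4:prop01}; I would keep the bookkeeping consistent with the stated form.

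For the second formula~(\ref{idorf}), the starting point is~(\ref{sec2:eq derivative}): $(\tilde{\rm d}\phi(\boldsymbol{a}))_A(b,c)=\langle\partial_{\boldsymbol{b}}\phi(\boldsymbol{a})\mid\boldsymbol{c}\rangle_X-\langle\partial_{\boldsymbol{c}}\phi(\boldsymbol{a})\mid\boldsymbol{b}\rangle_X$. Differentiating $\langle\phi(\boldsymbol{a})\mid\boldsymbol{c}\rangle_X=\frac{1}{8\pi^3}\int_X\mathrm{tr}\,(F_A\wedge a\wedge c+a\wedge F_A\wedge c)$ in direction $b$ and likewise the second term in direction $c$, and then subtracting, the derivatives hitting $F_A$ contribute $\langle\mathrm{d}_Ab\wedge a+a\wedge\mathrm{d}_Ab\mid c\rangle_X-\langle\mathrm{d}_Ac\wedge a+a\wedge\mathrm{d}_Ac\mid b\rangle_X$ type terms, the derivatives hitting $a$ contribute $\phi_A(\partial_b\boldsymbol{a})$ and $\phi_A(\partial_c\boldsymbol{a})$ pieces, and the derivatives hitting the ``test'' slots contribute $\langle\phi_A(a)\mid\partial_b\boldsymbol{c}\rangle_X$ and $\langle\phi_A(a)\mid\partial_c\boldsymbol{b}\rangle_X$ — but these last two coincide and cancel in the antisymmetrization. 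What remains I would reorganize using the cyclic identity from Lemma~\ref{sec4:lem01}: the combination of $\mathrm{d}_A$-terms is precisely $\kappa_A(a,b,c)$ minus the two surviving $\mathrm{d}_A$-brackets $-\langle\mathrm{d}_Ab\wedge c+c\wedge\mathrm{d}_Ab\mid a\rangle_X-\langle a\wedge\mathrm{d}_Ac+\mathrm{d}_Ac\wedge a\mid b\rangle_X$, and since by definition $i_{\boldsymbol{b}}\tilde{\rm d}\phi(\boldsymbol{a})$ evaluated on $c$ equals $(\tilde{\rm d}\phi(\boldsymbol{a}))_A(b,c)$, collecting terms yields~(\ref{idorf}).

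The main obstacle I anticipate is purely organizational rather than conceptual: keeping track of the orientation/ordering signs in the wedge products under the trace (cyclicity of trace on matrix-valued forms picks up degree-dependent signs) and correctly converting every expression of the form $\langle\,\cdot\,\wedge\,\cdot\mid\,\cdot\,\rangle_X$ into the canonical ordering used in the statement, since $\phi_A$ is skew-adjoint and the integrands are only equal up to exact forms that integrate to boundary terms — but here $X$ may have boundary, so one must be careful that no spurious boundary integral is dropped. The check that all such boundary contributions either cancel in the antisymmetrization or are already absorbed into the definition~(\ref{def4kappa}) of $\kappa$ on $\mathcal{A}_X$ via the Stokes-theorem identity of Lemma~\ref{sec4:lem01} is the step deserving the most care.
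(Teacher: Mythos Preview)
Your treatment of (\ref{Lie4}) is exactly the paper's: start from (\ref{sec2:Lie der}), apply Leibniz with $\partial_aF=\mathrm{d}_Aa$, and use $[\boldsymbol{a},\boldsymbol{c}]=\partial_a\boldsymbol{c}-\partial_c\boldsymbol{a}$ to leave the $\langle\phi_A(b)\mid\partial_c\boldsymbol{a}\rangle_X$ term. Nothing to add there.

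For (\ref{idorf}) the strategy is again the paper's, but two points in your sketch are wrong. First, the claim that $\langle\phi_A(a)\mid\partial_b\boldsymbol{c}\rangle_X$ and $\langle\phi_A(a)\mid\partial_c\boldsymbol{b}\rangle_X$ ``coincide and cancel in the antisymmetrization'' is false: $\partial_b\boldsymbol{c}-\partial_c\boldsymbol{b}=[\boldsymbol{b},\boldsymbol{c}]$, not zero. If you literally use (\ref{sec2:eq derivative}), only the coefficients of the $1$-form $\phi(\boldsymbol{a})$ are differentiated and these test-slot terms never arise. The paper instead writes out the full three-term formula $\partial_{\boldsymbol{b}}\langle\phi(\boldsymbol{a})\mid\boldsymbol{c}\rangle_X-\partial_{\boldsymbol{c}}\langle\phi(\boldsymbol{a})\mid\boldsymbol{b}\rangle_X-\langle\phi(\boldsymbol{a})\mid[\boldsymbol{b},\boldsymbol{c}]\rangle_X$; then $\partial_b\boldsymbol{c}$ and $\partial_c\boldsymbol{b}$ do appear, but they cancel \emph{against the bracket term}, not against each other.

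Second, invoking Lemma~\ref{sec4:lem01} to bring in $\kappa_A(a,b,c)$ is a detour that does not close: formula (\ref{idorf}) contains no $\kappa$, so if your $\mathrm{d}_A$-terms really summed to ``$\kappa$ minus the two surviving brackets'' you would end up with an extra $\kappa$. What is actually needed, and what the paper uses, is the \emph{pointwise} trace identity
\[
\bigl\langle a\wedge \mathrm{d}_Ab + \mathrm{d}_Ab\wedge a \bigm| c\bigr\rangle_X \;=\; -\,\bigl\langle c\wedge \mathrm{d}_Ab + \mathrm{d}_Ab\wedge c \bigm| a\bigr\rangle_X,
\]
which follows from $\mathrm{tr}(\alpha\wedge\beta)=(-1)^{|\alpha||\beta|}\mathrm{tr}(\beta\wedge\alpha)$ at the integrand level. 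No Stokes theorem, no boundary terms, and no appeal to Lemma~\ref{sec4:lem01} are involved; your worry about spurious boundary integrals is therefore misplaced for this lemma.
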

\medskip 

\begin{proof}  
Since  $\partial_aF_A=\mathrm{d}_Aa$ for the curvature $F_A$ of $A$.   We have from (\ref{sec2:eqn_evaluation}) and (\ref{sec2:Lie der}), 
\begin{align*}
\,&(\mathcal{L}_{\boldsymbol{a}}\phi(\boldsymbol{b}))_A(c) \\[0.2cm]
=&\; \mathcal{L}_{\boldsymbol{a}}(F_A\wedge b+b\wedge F_A)(c)\\[0.2cm]
=&\; \partial_{\boldsymbol{a}}\bigl\langle F_A\wedge b+b\wedge F_A\mid c\bigr\rangle_X 
- \bigl\langle F_A\wedge b+b\wedge F_A \mid [\boldsymbol{a},\boldsymbol{c}](A)\bigr\rangle_X \\[0.2cm]
=&\; \bigl\langle {\rm d}_Aa\wedge b + F_A\wedge \partial_ab + \partial_ab\wedge F_A + b\wedge {\rm d}_Aa\mid c\bigr\rangle_X 
+ \bigl\langle F_A\wedge b+b\wedge F_A \mid \partial_ac\bigr\rangle_X \\[0.2cm]
&\;\qquad\qquad -\bigl\langle F_A\wedge b+b\wedge F_A\mid \partial_ac-\partial_ca\bigr\rangle_X\\[0.2cm]
=&\; \bigl\langle F_A\wedge \partial_ab + \partial_ab\wedge F_A\mid c\bigr\rangle_X + \bigl\langle {\rm d}_Aa\wedge b + b\wedge {\rm d}_Aa\mid c\bigr\rangle_X 
+\,\bigl\langle F_A\wedge b+b\wedge F_A\mid \partial_ca\bigr\rangle_X\,, 
\end{align*}
which proves the first formula (\ref{Lie4}).  Next, we have
 \begin{align}\label{sec4:lem_2ndeqn}
\,&\bigl(i_{\boldsymbol{b}}{\tilde{\rm d}}\phi(\boldsymbol{a})\bigr)_A(c) \\[0.2cm]
=&\; ({\tilde{\rm d}}(F_A\wedge a+a\wedge F_A)\bigr)_A(b,c) \notag \\[0.2cm]
=&\; \partial_b \bigl\langle F_A\wedge a+a\wedge F_A\mid c\bigr\rangle_X - \partial_c\bigl\langle F_A\wedge a+a\wedge F_A \mid b\bigr\rangle_X  
-\,
 \langle a\wedge F_A+F_A\wedge a\,\mid [\,b,\,c\,]\rangle_X\,\notag
 \\
=&\; \bigl\langle  F_A\wedge \partial_ba + \partial_ba\wedge F_A + a\wedge {\rm d}_Ab +  {\rm d}_Ab\wedge a\mid c\bigr\rangle_X 
+\,\langle a\wedge F_A+F_A\wedge a\mid \partial_bc\rangle
 \notag \\[0.2cm]
&\quad - \bigl\langle   F_A\wedge \partial_ca + \partial_ca\wedge F_A + a\wedge {\rm d}_Ac + {\rm d}_Ac\wedge a \mid b\bigr\rangle_X
-\,\langle a\wedge F_A+F_A\wedge a\mid \partial_cb\rangle_X \notag \\
&\hspace{100pt} - \langle a\wedge F_A+F_A\wedge a \mid \partial_bc\,-\partial_cb\,\rangle_X
\notag \\[0.2cm]
&=\,\langle\,F_A\wedge \partial_ba +\partial_ba\wedge F_A \mid c\rangle_X\,+\,\langle\,F_A\wedge b+b\wedge F_A \mid \partial_ca\,\rangle_X\,\notag\\[0.2cm]
&\,\hspace{100pt} -\,\langle\, c\wedge {\rm d}_Ab+ {\rm d}_Ab\wedge c \mid a\rangle_X\,
- 
\bigl\langle  a\wedge {\rm d}_Ac\,+\,{\rm d}_Ac\wedge a \mid b \bigr\rangle_X
 \,,\notag
\end{align}
where we used $\,\bigl\langle a\wedge {\rm d}_Ab +  {\rm d}_Ab\wedge a\mid c\bigr\rangle_X 
=\,-\, \langle\, c\wedge {\rm d}_Ab+ {\rm d}_Ab\wedge c \mid a\rangle_X\,$.   
The second formula (\ref{idorf}) is proved.
\end{proof}
\bigskip 

  Lemmas \ref{sec4:lem01} and \ref{sec4:lem02}  yield the following:
\begin{prop}\label{sec4:prop02}
For any vector field $\boldsymbol{a},\boldsymbol{b}$ on $\mathcal{A}_X$, 
\[
\mathcal{L}_{\boldsymbol{a}}\phi(\boldsymbol{b})-i_{\boldsymbol{b}}\tilde{\rm d}\phi(\boldsymbol{a}) 
-i_{\boldsymbol{a}\wedge\boldsymbol{b}}\kappa\, = \phi([\boldsymbol{a},\boldsymbol{b}]).
\]
\end{prop}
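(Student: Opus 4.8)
The plan is to verify the identity by pairing both sides with an arbitrary vector field $\boldsymbol{c}$ on $\mathcal{A}_X$ and then appealing to nondegeneracy of $\langle\cdot\mid\cdot\rangle_X$. Fixing $A\in\mathcal{A}_X$ and writing $a=\boldsymbol{a}(A)$, $b=\boldsymbol{b}(A)$, $c=\boldsymbol{c}(A)$, I would subtract the expression (\ref{idorf}) for $\bigl(i_{\boldsymbol{b}}\tilde{\rm d}\phi(\boldsymbol{a})\bigr)_A(c)$ from the expression (\ref{Lie4}) for $(\mathcal{L}_{\boldsymbol{a}}\phi(\boldsymbol{b}))_A(c)$ supplied by Lemma \ref{sec4:lem02}. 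The two copies of $\langle\phi_A(b)\mid(\partial_c\boldsymbol{a})(A)\rangle_X$ cancel, and the curvature-dependent pieces recombine through the Lie-bracket formula (\ref{sec2:eqn_Lie bracket}) for vector fields on $\mathcal{A}_X$, giving
\[
\bigl(\mathcal{L}_{\boldsymbol{a}}\phi(\boldsymbol{b})-i_{\boldsymbol{b}}\tilde{\rm d}\phi(\boldsymbol{a})\bigr)_A(c)
=\bigl\langle\phi([\boldsymbol{a},\boldsymbol{b}])\mid c\bigr\rangle_X+R_A(a,b,c),
\]
where $R_A(a,b,c)$ is the sum of the three curvature-free pairings left over, namely $\langle{\rm d}_Aa\wedge b+b\wedge{\rm d}_Aa\mid c\rangle_X$ inherited from (\ref{Lie4}) together with $\langle{\rm d}_Ab\wedge c+c\wedge{\rm d}_Ab\mid a\rangle_X$ and $\langle a\wedge{\rm d}_Ac+{\rm d}_Ac\wedge a\mid b\rangle_X$ arising from the two terms subtracted off in (\ref{idorf}).

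The key observation is that $R_A(a,b,c)$ is exactly the expression for $\kappa_A(a,b,c)$ furnished by Lemma \ref{sec4:lem01}, and by definition of the contraction $R_A(a,b,c)=(i_{\boldsymbol{a}\wedge\boldsymbol{b}}\kappa)_A(c)$. Substituting this in and transposing the $\kappa$ term yields
\[
\bigl(\mathcal{L}_{\boldsymbol{a}}\phi(\boldsymbol{b})-i_{\boldsymbol{b}}\tilde{\rm d}\phi(\boldsymbol{a})-i_{\boldsymbol{a}\wedge\boldsymbol{b}}\kappa\bigr)_A(c)=\bigl\langle\phi([\boldsymbol{a},\boldsymbol{b}])\mid c\bigr\rangle_X
\]
for every vector field $\boldsymbol{c}$. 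Since $\langle\cdot\mid\cdot\rangle_X$ identifies $T_A^*\mathcal{A}_X$ with the dual of $T_A\mathcal{A}_X$, this equality of values forces the claimed equality of $1$-forms $\mathcal{L}_{\boldsymbol{a}}\phi(\boldsymbol{b})-i_{\boldsymbol{b}}\tilde{\rm d}\phi(\boldsymbol{a})-i_{\boldsymbol{a}\wedge\boldsymbol{b}}\kappa=\phi([\boldsymbol{a},\boldsymbol{b}])$.

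Once Lemmas \ref{sec4:lem01} and \ref{sec4:lem02} are available the proof is pure bookkeeping, so the only place demanding care is the sign and wedge-order accounting in $R_A(a,b,c)$: one must check that the single term inherited from (\ref{Lie4}) and the two terms inherited (with a sign flip, because (\ref{idorf}) is subtracted) from (\ref{idorf}) line up verbatim with the three cyclic summands of $\kappa_A(a,b,c)$ in Lemma \ref{sec4:lem01}. I expect no analytic difficulty here; the substantive work has already been absorbed into those two lemmas, and this proposition is their immediate algebraic consequence. Combined with Proposition \ref{sec4:prop01} (which makes $\mathcal{D}^\phi_X$ isotropic, hence maximal isotropic as the graph of a skew bundle map), it gives that $\mathcal{D}^\phi_X$ is a $\kappa$-twisted Dirac structure on $\mathcal{A}_X$.
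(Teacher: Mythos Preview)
Your proposal is correct and follows exactly the approach the paper intends: the paper simply states that Lemmas \ref{sec4:lem01} and \ref{sec4:lem02} yield the proposition, and your argument spells out precisely the subtraction and matching of terms that this entails. The sign and wedge-order bookkeeping you flag as the only delicate point does indeed line up verbatim with the three cyclic summands in Lemma \ref{sec4:lem01}.
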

\medskip 

Propositions \ref{sec4:prop01} and \ref{sec4:prop02} yield the following:  
\begin{thm}\label{sec4:prop_phi}~~~
\begin{equation*}
 \mathcal{D}^\phi_X \coloneqq \coprod_{A\in\mathcal{A}_X}
  \Bigl\{\,a\oplus \phi_A(a)\,\bigm|\,a\in T_A\mathcal{A}_X \,\Bigr\}
\end{equation*}
is a $\kappa$-twisted Dirac structure.
\end{thm}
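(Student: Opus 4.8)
The plan is to check the two defining properties of a $\kappa$-twisted Dirac structure for $\mathcal{D}^\phi_X$: that it is a maximal isotropic subbundle of $E_0(X)$ with respect to the bilinear form $\langle\cdot,\cdot\rangle_+$, and that its space of sections is closed under the $\kappa$-twisted bracket $\llbracket\cdot,\cdot\rrbracket_\kappa$ obtained from (\ref{3formCourant}) (the analogue on $\mathcal{A}_X$ of (\ref{kappaCbracket})). Isotropy and closure will be read off directly from Propositions \ref{sec4:prop01} and \ref{sec4:prop02}; the one point that deserves a short argument of its own is maximality, since in the present infinite-dimensional (Sobolev) setting one cannot simply invoke a dimension count.

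For isotropy I would note that every section of $\mathcal{D}^\phi_X$ is of the form $\tilde{\boldsymbol{a}}=\boldsymbol{a}\oplus\phi(\boldsymbol{a})$ for a vector field $\boldsymbol{a}$ on $\mathcal{A}_X$, so $\langle\tilde{\boldsymbol{a}},\tilde{\boldsymbol{b}}\rangle_+=\tfrac12\{\langle\phi(\boldsymbol{a})\mid\boldsymbol{b}\rangle_X+\langle\phi(\boldsymbol{b})\mid\boldsymbol{a}\rangle_X\}=0$ by Proposition \ref{sec4:prop01}; hence $\mathcal{D}^\phi_X\subseteq(\mathcal{D}^\phi_X)^{\perp}$. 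For maximality I would argue pointwise: if $b\oplus\beta\in T_A\mathcal{A}_X\oplus T^*_A\mathcal{A}_X$ satisfies $\langle a\oplus\phi_A(a),\,b\oplus\beta\rangle_+=0$ for all $a\in T_A\mathcal{A}_X$, then $\langle\phi_A(a)\mid b\rangle_X+\langle\beta\mid a\rangle_X=0$ for all $a$; the skew-symmetry of $\phi$ recorded in the computation preceding Proposition \ref{sec4:prop01}, namely $\langle\phi_A(a)\mid b\rangle_X=\tfrac{1}{8\pi^3}\int_X\mathrm{tr}\,[(a\wedge b-b\wedge a)\wedge F_A]=-\langle\phi_A(b)\mid a\rangle_X$, turns this into $\langle\beta-\phi_A(b)\mid a\rangle_X=0$ for all $a$, and nondegeneracy of the pairing $\langle\cdot\mid\cdot\rangle_X$ forces $\beta=\phi_A(b)$, i.e. $b\oplus\beta\in\mathcal{D}^\phi_X$. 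Thus $(\mathcal{D}^\phi_X)^{\perp}\subseteq\mathcal{D}^\phi_X$, and combined with the previous inclusion one gets $\mathcal{D}^\phi_X=(\mathcal{D}^\phi_X)^{\perp}$, so $\mathcal{D}^\phi_X$ is maximal isotropic.

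For closure under the bracket, take sections $\tilde{\boldsymbol{a}}=\boldsymbol{a}\oplus\phi(\boldsymbol{a})$ and $\tilde{\boldsymbol{b}}=\boldsymbol{b}\oplus\phi(\boldsymbol{b})$. By the definition of the twisted bracket, $\llbracket\tilde{\boldsymbol{a}},\tilde{\boldsymbol{b}}\rrbracket_\kappa=[\boldsymbol{a},\boldsymbol{b}]\oplus\bigl(\mathcal{L}_{\boldsymbol{a}}\phi(\boldsymbol{b})-i_{\boldsymbol{b}}\tilde{\rm d}\phi(\boldsymbol{a})-i_{\boldsymbol{a}\wedge\boldsymbol{b}}\kappa\bigr)$, and Proposition \ref{sec4:prop02} identifies the second summand with $\phi([\boldsymbol{a},\boldsymbol{b}])$. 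Hence $\llbracket\tilde{\boldsymbol{a}},\tilde{\boldsymbol{b}}\rrbracket_\kappa=[\boldsymbol{a},\boldsymbol{b}]\oplus\phi([\boldsymbol{a},\boldsymbol{b}])=\widetilde{[\boldsymbol{a},\boldsymbol{b}]}$, again a section of $\mathcal{D}^\phi_X$; since every section of $\mathcal{D}^\phi_X$ has this shape, $\varGamma(\mathcal{A}_X,\mathcal{D}^\phi_X)$ is closed under $\llbracket\cdot,\cdot\rrbracket_\kappa$. Granting that $\kappa$ is closed, so that the $\kappa$-modified structure on $E_0(X)$ is genuinely a Courant algebroid (checked as for $\mathcal{A}_M$ in \S3.2, with Lemma \ref{sec4:lem01} supplying the requisite representation of $\kappa$), this shows $\mathcal{D}^\phi_X$ is a $\kappa$-twisted Dirac structure. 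I expect the only real obstacle to be the maximality step — one must be certain the graph of $\phi$ exhausts its own orthogonal complement — which is why I would verify $\mathcal{D}^\phi_X=(\mathcal{D}^\phi_X)^{\perp}$ directly from nondegeneracy of $\langle\cdot\mid\cdot\rangle_X$ rather than by any finite-dimensional reasoning.
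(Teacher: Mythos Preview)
Your proposal is correct and follows essentially the same route as the paper: the paper simply states that Propositions \ref{sec4:prop01} and \ref{sec4:prop02} yield the theorem, and you invoke exactly those two results for isotropy and closure under $\llbracket\cdot,\cdot\rrbracket_\kappa$. Your explicit verification of maximality via nondegeneracy of $\langle\cdot\mid\cdot\rangle_X$ is a welcome addition that the paper leaves implicit (it treats the graph of a skew bundle map as automatically maximal isotropic, as in the three-manifold case), but it does not change the overall strategy.
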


\subsubsection{A non-twisted Dirac structure on $\mathcal{A}_X$}

On the subspace of flat connections $\mathcal{A}^{\flat}_X\subset \mathcal{A}_X$ the subspace $\mathcal{D}^\phi_X\subset E_0(X)$ reduces to the subspace 
$T\mathcal{A}^{\flat}_X\oplus \{\boldsymbol{0}\}$.   
That implies no knowledge about the Courant algebroid structure on \(T\mathcal{A}^{\flat}_X\oplus T^{\ast}\mathcal{A}^{\flat}_X\).   
In the sequel we shall introduce a more precise Dirac structure over
 $\mathcal{A}_X$ that relates to the winding number of flat connections. 

We introduce the following bundle homomorphism:
\begin{equation}
\gamma^{\,\prime}_A(a)= A^2\wedge a + a\wedge A^2\,, \qquad  A\in \mathcal{A}_X, 
\end{equation}
where $A^2=A\wedge A$. 

\begin{thm}\label{sec4:prop_gamma'}
The subbundle of $E_0(X)\,$ given by 
\begin{equation*}
 \mathcal{D}^{\,\prime}_X \coloneqq \coprod_{A\in\mathcal{A}_X}
  \Bigl\{\,a\oplus \gamma^{\,\prime}_A(a)\,\bigm|\,a\in T_A\mathcal{A}_X \,\Bigr\}
\end{equation*}
is a Dirac structure.
\end{thm}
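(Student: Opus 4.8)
The plan is to follow exactly the same three-step pattern used for $\mathcal{D}_M$ and $\mathcal{D}^{\phi}_X$: verify that $\mathcal{D}^{\prime}_X$ is isotropic for the bilinear form, verify that its sections are closed under the (now \emph{untwisted}) Dorfman bracket $\llbracket\cdot,\cdot\rrbracket$, and then invoke maximal isotropy (which is automatic since $\mathcal{D}^{\prime}_X$ is the graph of a bundle map $T\mathcal{A}_X\to T^{\ast}\mathcal{A}_X$ and hence has ``half'' the rank). The key structural point, which should be stated explicitly, is that $\gamma^{\prime}_A(a)=A^2\wedge a+a\wedge A^2$ has the \emph{same algebraic shape} as $\phi_A(a)=F_A\wedge a+a\wedge F_A$, with $F_A$ replaced by $A^2$; so most computations are formally identical, the only difference being the directional derivative: $\partial_a F_A=\mathrm{d}_Aa$, whereas $\partial_a(A^2)=a\wedge A+A\wedge a$. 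Tracking this substitution is what turns the $\kappa$-twisted statement into an untwisted one.

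First I would establish isotropy. For vector fields $\boldsymbol{a},\boldsymbol{b}$ on $\mathcal{A}_X$, a computation identical to the one preceding Proposition~\ref{sec4:prop01} gives
\[
\langle\gamma^{\prime}(\boldsymbol{a})\mid\boldsymbol{b}\rangle_X(A)=\frac{1}{8\pi^3}\int_X\mathrm{tr}\,[(a\wedge b-b\wedge a)\wedge A^2],
\]
which is antisymmetric in $(a,b)$ (up to the graded sign coming from moving a $1$-form past a $1$-form), so $\langle\gamma^{\prime}(\boldsymbol{a})\mid\boldsymbol{b}\rangle_X+\langle\gamma^{\prime}(\boldsymbol{b})\mid\boldsymbol{a}\rangle_X=0$ and hence $\mathcal{D}^{\prime}_X$ is isotropic, maximal isotropic by the dimension count.

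Next I would prove the closure identity
\[
\mathcal{L}_{\boldsymbol{a}}\gamma^{\prime}(\boldsymbol{b})-i_{\boldsymbol{b}}\tilde{\rm d}\gamma^{\prime}(\boldsymbol{a})=\gamma^{\prime}([\boldsymbol{a},\boldsymbol{b}])
\]
by reworking Lemma~\ref{sec4:lem02} with $F_A\rightsquigarrow A^2$ and $\mathrm{d}_Aa\rightsquigarrow a\wedge A+A\wedge a$. Mimicking the proof of (\ref{Lie4}) gives
\[
(\mathcal{L}_{\boldsymbol{a}}\gamma^{\prime}(\boldsymbol{b}))_A(c)=\langle\gamma^{\prime}_A(\partial_a\boldsymbol{b})\mid c\rangle_X+\langle(aA+Aa)\wedge b+b\wedge(aA+Aa)\mid c\rangle_X+\langle\gamma^{\prime}_A(b)\mid(\partial_c\boldsymbol{a})(A)\rangle_X,
\]
and mimicking the proof of (\ref{idorf}) gives the analogous expression for $(i_{\boldsymbol{b}}\tilde{\rm d}\gamma^{\prime}(\boldsymbol{a}))_A(c)$, in which the terms that in the $\phi$-case became the boundary integrals assembling into $-i_{\boldsymbol{a}\wedge\boldsymbol{b}}\kappa$ (via Lemma~\ref{sec4:lem01}) now instead are \emph{bulk} integrals of $\mathrm{tr}$ of expressions like $(a\wedge A+A\wedge a)\wedge b\wedge c$; these are exactly the extra terms produced by $\partial(A^2)$ and they combine by the graded Leibniz/cyclicity of $\mathrm{tr}$ to cancel, leaving no residual $3$-form. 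Subtracting the two displays, the $\partial_c\boldsymbol{a}$-terms cancel as in the $\phi$-case, the $\gamma^{\prime}(\partial_{(\cdot)}\boldsymbol{(\cdot)})$-terms assemble into $\gamma^{\prime}((\partial_a\boldsymbol{b}-\partial_b\boldsymbol{a})(A))=\gamma^{\prime}([\boldsymbol{a},\boldsymbol{b}](A))$ by (\ref{sec2:eqn_Lie bracket}), and the remaining $(aA+Aa)$-type terms cancel identically by cyclic invariance of the trace under $X$. Hence the closure identity holds with no twist, and $\mathcal{D}^{\prime}_X$ is a (non-twisted) Dirac structure in $E_0(X)$.

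The main obstacle is bookkeeping the graded signs in the last step: unlike $F_A$, which is a $2$-form and commutes past everything with an even sign, the correction $\partial_a(A^2)=a\wedge A+A\wedge A$ mixes odd-degree factors, so one must be careful that the ``extra'' terms really do cancel rather than accumulate into a nonzero $3$-form $i_{\boldsymbol{a}\wedge\boldsymbol{b}}(\text{something})$. The clean way to see the cancellation is to check that the putative twist $3$-form $\tilde{\rm d}(\langle\gamma^{\prime}(\cdot)\mid\cdot\rangle_X)$ vanishes after applying $\mathrm{d}$ and Stokes: since $A^2$ is not closed its contribution is a genuine bulk term, and a direct $\mathrm{tr}$-cyclicity computation (the analogue of Lemma~\ref{sec4:lem01}, but with everything staying on $X$) shows the total is zero — i.e.\ $\gamma^{\prime}$, unlike $\omega$ on $\mathcal{A}_M$, pulls back the canonical $2$-form to something whose $\tilde{\rm d}$ already vanishes on the nose. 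Once that vanishing is recorded, the closure identity reduces verbatim to the $\phi$-argument of Proposition~\ref{sec4:prop02} with $\kappa=0$.
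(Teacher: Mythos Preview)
Your proposal is correct and follows essentially the same approach as the paper: verify isotropy via the antisymmetry of $\langle\gamma'(\boldsymbol{a})\mid\boldsymbol{b}\rangle_X=\frac{1}{8\pi^3}\int_X\mathrm{tr}[(ab-ba)A^2]$, and then check the untwisted closure identity $\mathcal{L}_{\boldsymbol{a}}\gamma'(\boldsymbol{b})-i_{\boldsymbol{b}}\tilde{\rm d}\gamma'(\boldsymbol{a})=\gamma'([\boldsymbol{a},\boldsymbol{b}])$ by redoing the computations of Lemma~\ref{sec4:lem02} with $F_A\rightsquigarrow A^2$ and $\partial_aF_A=\mathrm{d}_Aa\rightsquigarrow \partial_a(A^2)=aA+Aa$. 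The paper carries this out explicitly and finds (after trace cyclicity) that the $(aA+Aa)$-type terms in $\mathcal{L}_{\boldsymbol{a}}\gamma'(\boldsymbol{b})$ and in $i_{\boldsymbol{b}}\tilde{\rm d}\gamma'(\boldsymbol{a})$ are \emph{identical}, so they drop out on subtraction---which is exactly the cancellation you anticipate; note also the small typo $\partial_a(A^2)=a\wedge A+A\wedge A$ in your final paragraph, which should read $a\wedge A+A\wedge a$.
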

For the proof we verify the following two conditions:
\begin{enumerate}[\quad(1)]
\item $\bigl\langle \tilde{\boldsymbol{a}},\,\tilde{\boldsymbol{b}}\bigr\rangle_+=0$, 
\item $\mathcal{L}_{\boldsymbol{a}}\gamma^{\,\prime}(\boldsymbol{b})-i_{\boldsymbol{b}}\tilde{\rm d}\gamma^{\,\prime}(\boldsymbol{a})
=\gamma^{\,\prime}([\boldsymbol{a},\boldsymbol{b}])$
\end{enumerate}
for $\tilde{\boldsymbol{a}}=\boldsymbol{a}\oplus \gamma^{\,\prime}(\boldsymbol{a})$ and $\tilde{\boldsymbol{b}}=\boldsymbol{b}\oplus \gamma^{\,\prime}(\boldsymbol{b})$. 
It is easy to check (1) since 
\[
\langle  \gamma^{\prime}(a)\mid \,b\, \rangle_X\,=\,\frac{1}{8\pi^3}\int_X\,{\rm tr}\,(A^2\wedge a+a\wedge A^2)\wedge b
=\,\frac{1}{8\pi^3}\int_X\,{\rm tr}\,[( a\wedge b-b\wedge a)\wedge A^2]\,.\]
 As for (2), the Lie derivative $\mathcal{L}_{\boldsymbol{a}}\gamma^{\,\prime}(\boldsymbol{b})$ is calculated to be 
\begin{align*}
&\bigl(\mathcal{L}_{\boldsymbol{a}}\gamma^{\,\prime}(\boldsymbol{b})\bigr)_A(\boldsymbol{c}) 
=\; \mathcal{L}_{\boldsymbol{a}}(A^2\wedge b + b\wedge A^2)(c) \notag \\
=&\; \partial_{\boldsymbol{a}}\bigl\langle A^2\wedge b + b\wedge A^2\mid c\bigr\rangle_X - \bigl\langle A^2\wedge b + b\wedge A^2\mid 
[\boldsymbol{a},\boldsymbol{c}](A)\bigr\rangle_X \notag \\
=&\; \bigl\langle (a\wedge A+A\wedge a)\wedge b + b\wedge (a\wedge A+A\wedge a) + A^2\wedge\partial_ab + \partial_ab\wedge A^2\mid c\bigr\rangle_X \notag \\
&\hspace{90pt} +\bigl\langle A^2\wedge b + b\wedge A^2\mid \partial_ac\bigr\rangle_X - \bigl\langle A^2\wedge b + b\wedge A^2\mid 
\partial_ac-\partial_ca\bigr\rangle_X \notag \\
=&\; \bigl\langle (a\wedge A+A\wedge a)\wedge b \mid c\bigr\rangle_X + \bigl\langle b\wedge (a\wedge A+A\wedge a) \mid c\bigr\rangle_X 
 \\
&\hspace{75pt} + \bigl\langle A^2\wedge\partial_ab + \partial_ab\wedge A^2\mid c\bigr\rangle_X + \bigl\langle A^2\wedge b + b\wedge A^2\mid \partial_ca\bigr\rangle_X\, . 
\end{align*} 
On the other hand,  we have by (\ref{sec2:eq derivative}) the following:
\begin{align*}
&\bigl(i_{\boldsymbol{b}}{\tilde{\rm d}}\gamma^{\,\prime}(\boldsymbol{a})\bigr)_A(c)\\
=&\; ({\tilde{\rm d}}(A^2\wedge a+a\wedge A^2)\bigr)_A(b,c) \\
=&\; \bigl\langle \partial_b(A^2\wedge a+a\wedge A^2)\mid c\bigr\rangle_X - \bigl\langle \partial_c(A^2\wedge a+a\wedge A^2)\mid b\bigr\rangle_X \\
=&\; \bigl\langle (b\wedge A + A\wedge b)\wedge a + A^2\wedge \partial_ba + \partial_ba\wedge A^2 + a\wedge (b\wedge A+A\wedge b)\mid c\bigr\rangle_X \\
&\quad -\bigl\langle (c\wedge A + A\wedge c)\wedge a + A^2\wedge \partial_ca + \partial_ca\wedge A^2 + a\wedge (c\wedge A+A\wedge c)\mid b\bigr\rangle_X \\
=&\; \bigl\langle (a\wedge A+A\wedge a)\wedge b \mid c\bigr\rangle_X + \bigl\langle b\wedge (a\wedge A+A\wedge a) \mid c\bigr\rangle_X 
\\
&\hspace{75pt} + \bigl\langle A^2\wedge\partial_ba+\partial_ba\wedge A^2\mid c\bigr\rangle_X + \bigl\langle A^2\wedge b + b\wedge A^2\mid \partial_ca\bigr\rangle_X\, .
\end{align*}
Therefore, 
\begin{align*}
\bigl(\mathcal{L}_{\boldsymbol{a}}\gamma'(\boldsymbol{b})\bigr)_A(c) - \bigl(i_{\boldsymbol{b}}{\tilde{\rm d}}\gamma'(\boldsymbol{a})\bigr)_A(c) 
&= \bigl\langle A^2\wedge(\partial_ab-\partial_ba)+(\partial_ab-\partial_ba)\wedge A^2\mid c\bigr\rangle_X \\
&= \bigl\langle A^2\wedge [\boldsymbol{a},\boldsymbol{b}](A) + [\boldsymbol{a},\boldsymbol{b}](A) \wedge A^2\mid c\bigr\rangle_X \\
&= \bigl\langle \gamma^{\,\prime}_A\bigl([\boldsymbol{a},\boldsymbol{b}](A)\bigr)\mid c \bigr\rangle_X\, , 
\end{align*}
which proves (2). 

\subsubsection{A family of $\kappa$-twisted Dirac structures on $\mathcal{A}_X$ }

Now we discuss the perturbation of the $\kappa$-twisted Dirac structure \(\mathcal{D}^{\phi}_X\) by the (non-twisted) Dirac structure \(\mathcal{D}^{\,\prime}_X\). 
We consider the bundle homomorphism \(\,\gamma^{\,t}:\,T\mathcal{A}_X\longrightarrow T^{\ast}\mathcal{A}_X\) given by 
\begin{equation}
\gamma^{\,t}_A(a)\coloneqq \, (F_A+{t}\,A^2)\wedge a + a\wedge(F_A+{t}\,A^2)\,,\quad t\in\mathbb{R}\,.
\end{equation}
We have  $\gamma^{\,t} = \phi\,+\,{\,t}\,\gamma^{\,\prime}$. 

The following theorem is a consequence of Theorems \ref{sec4:prop_phi} and \ref{sec4:prop_gamma'}
\begin{thm}~~~ For \(t\in\mathbb{R}\), put 
\begin{equation}
 \mathcal{D}^{\,t}_X \coloneqq \coprod_{A\in\mathcal{A}_X}
  \Bigl\{\,a\oplus \gamma^{\,t}_A(a)\,\bigm|\,a\in T_A\mathcal{A}_X \,\Bigr\}\,.
\end{equation}
Then \(\bigl\{ \mathcal{D}^{\,t}_X\bigr\}_{{\,t}\in\mathbb{R}}\) gives a family of  $\kappa$-twisted Dirac structures.
\end{thm}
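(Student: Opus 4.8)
The plan is to deduce the statement directly from Theorems~\ref{sec4:prop_phi} and \ref{sec4:prop_gamma'} by exploiting the linearity of all operations involved together with the decomposition $\gamma^{\,t}=\phi+t\,\gamma^{\,\prime}$. For each fixed $t$ one must check three things for the subbundle $\mathcal{D}^{\,t}_X$: (i) it is isotropic for $\langle\cdot,\cdot\rangle_+$; (ii) it is maximal isotropic; (iii) its space of sections is closed under the $\kappa$-twisted bracket $\llbracket\cdot,\cdot\rrbracket_{\kappa}$.

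For (i), since $\langle\gamma^{\,t}(\boldsymbol{a})\mid\boldsymbol{b}\rangle_X=\langle\phi(\boldsymbol{a})\mid\boldsymbol{b}\rangle_X+t\,\langle\gamma^{\,\prime}(\boldsymbol{a})\mid\boldsymbol{b}\rangle_X$, the symmetrized pairing of two sections $\tilde{\boldsymbol{a}}=\boldsymbol{a}\oplus\gamma^{\,t}(\boldsymbol{a})$ and $\tilde{\boldsymbol{b}}=\boldsymbol{b}\oplus\gamma^{\,t}(\boldsymbol{b})$ splits as the corresponding quantity for $\phi$ (which vanishes by Proposition~\ref{sec4:prop01}) plus $t$ times the one for $\gamma^{\,\prime}$ (which vanishes by verification (1) following Theorem~\ref{sec4:prop_gamma'}); hence $\langle\tilde{\boldsymbol{a}},\tilde{\boldsymbol{b}}\rangle_+=0$. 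For (ii), exactly as in the earlier cases $\mathcal{D}^{\,t}_X$ is the graph of the bundle homomorphism $\gamma^{\,t}\colon T\mathcal{A}_X\to T^{\ast}\mathcal{A}_X$, and the computation just made says precisely that $\gamma^{\,t}$ is skew with respect to the pairing; the graph of such a map is automatically maximal isotropic in $E_0(X)$.

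For (iii), write out the $\kappa$-twisted bracket of two sections: $\llbracket\tilde{\boldsymbol{a}},\tilde{\boldsymbol{b}}\rrbracket_{\kappa}=[\boldsymbol{a},\boldsymbol{b}]\oplus\bigl(\mathcal{L}_{\boldsymbol{a}}\gamma^{\,t}(\boldsymbol{b})-i_{\boldsymbol{b}}\tilde{\rm d}\gamma^{\,t}(\boldsymbol{a})-i_{\boldsymbol{a}\wedge\boldsymbol{b}}\kappa\bigr)$. Using linearity of $\mathcal{L}_{\boldsymbol{a}}$, $\tilde{\rm d}$ and the contraction, the second component equals $\bigl(\mathcal{L}_{\boldsymbol{a}}\phi(\boldsymbol{b})-i_{\boldsymbol{b}}\tilde{\rm d}\phi(\boldsymbol{a})-i_{\boldsymbol{a}\wedge\boldsymbol{b}}\kappa\bigr)+t\,\bigl(\mathcal{L}_{\boldsymbol{a}}\gamma^{\,\prime}(\boldsymbol{b})-i_{\boldsymbol{b}}\tilde{\rm d}\gamma^{\,\prime}(\boldsymbol{a})\bigr)$, where the entire twist term $i_{\boldsymbol{a}\wedge\boldsymbol{b}}\kappa$ has been absorbed into the $\phi$-part and the $\gamma^{\,\prime}$-part is taken untwisted. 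By Proposition~\ref{sec4:prop02} the first bracketed expression is $\phi([\boldsymbol{a},\boldsymbol{b}])$, and by verification (2) following Theorem~\ref{sec4:prop_gamma'} the second is $\gamma^{\,\prime}([\boldsymbol{a},\boldsymbol{b}])$, so the second component is $\phi([\boldsymbol{a},\boldsymbol{b}])+t\,\gamma^{\,\prime}([\boldsymbol{a},\boldsymbol{b}])=\gamma^{\,t}([\boldsymbol{a},\boldsymbol{b}])$. Thus $\llbracket\tilde{\boldsymbol{a}},\tilde{\boldsymbol{b}}\rrbracket_{\kappa}$ is again a section of $\mathcal{D}^{\,t}_X$, which together with (i)--(ii) shows that $\mathcal{D}^{\,t}_X$ is a $\kappa$-twisted Dirac structure for every $t$.

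There is essentially no analytic or combinatorial obstacle: all the real content was already established in Theorems~\ref{sec4:prop_phi} and \ref{sec4:prop_gamma'}. The one point worth stressing --- and the only place a stray twist term or sign could go wrong --- is the bookkeeping in step (iii): the closed $3$-form $\kappa$ must enter once and only once, namely in the $\phi$-summand, precisely because $\mathcal{D}^{\phi}_X$ is $\kappa$-twisted Dirac while $\mathcal{D}^{\,\prime}_X$ is an \emph{ordinary} Dirac structure. Equivalently, one is invoking the general principle that if a $\phi$-twisted Dirac structure and an untwisted Dirac structure are both graphs of skew bundle maps whose sum is again skew, then the graph of the sum is a $\phi$-twisted Dirac structure.
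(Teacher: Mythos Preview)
Your proof is correct and follows exactly the approach the paper indicates: the paper simply states that the theorem ``is a consequence of Theorems~\ref{sec4:prop_phi} and \ref{sec4:prop_gamma'}'' without further argument, and your proposal spells out precisely how that consequence goes via the decomposition $\gamma^{\,t}=\phi+t\,\gamma^{\,\prime}$ and linearity. In particular, your careful bookkeeping of the twist term in step~(iii) is the right point to emphasize.
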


\subsubsection{Dirac structure over the space of flat connections}

Let $\mathcal{A}^{\flat}_X\,$ be the space of flat connections:
\[\mathcal{A}^{\flat}_X=\left\{\,A\in \mathcal{A}_X\mid F_A=0\,\right\}.\]
We have 
 \[T_A\mathcal{A}^{\flat}_X=\{\,a\in \Omega^1_{s-\frac12}(X\,,\,\mathfrak{g}\,)\mid {\rm d}_Aa=0\,\}.\]
 The subbundle \(T\mathcal{A}^{\flat}_X\) is integrable, that is, if \(\mathbf{a}\) and \(\mathbf{b}\) are flat vector fields; 
\( \mathbf{a}(A)\,,\mathbf{b}(A)\,\in T_A\mathcal{A}^{\flat}_X\) for \(\forall A\in \mathcal{A}^{\flat}_X\,\), 
then \(\, [\mathbf{a},\mathbf{b}]\) is a flat vector field.    In fact, this follows from the formula \({\rm d}_A\,[a,b]=[{\rm d}_Aa,b]+ [a,{\rm d}_Ab]\).

 Let 
\begin{equation}
 \mathcal{D}^{\flat}_X \coloneqq \coprod_{A\in\mathcal{A}_X}
  \Bigl\{\,a\oplus \gamma^1_A(a)\,\bigm|\,a\in T_A\mathcal{A}^{\flat}_X \,\Bigr\}\,,
\end{equation}
with 
\begin{equation}
\gamma^1_A(a)\,=\,\gamma^{\prime}_A(a)\coloneqq \, A^2\wedge a + a\wedge A^2\,,\quad 
A\in \mathcal{A}^{\flat}_X.
\end{equation}

Lemma \ref{sec4:lem01} shows that the Cartan 3-form \(\kappa\) on \(\mathcal{A}^{\flat}_X\) vanishes.    
Therefore we have the following:
\begin{thm}~~~
\(\mathcal{D}^{\flat}_X\) is a Dirac structure over \(\mathcal{A}^{\flat}_X\,\subset\,\mathcal{A}_X\) .
\end{thm}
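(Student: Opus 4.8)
The plan is to read $\mathcal{D}^{\flat}_X$ as the graph of a bundle map $T\mathcal{A}^{\flat}_X\to T^{\ast}\mathcal{A}^{\flat}_X$ and to verify isotropy, maximality and involutivity inside the \emph{standard}, non‑twisted, Courant algebroid $\mathbb{T}\mathcal{A}^{\flat}_X=T\mathcal{A}^{\flat}_X\oplus T^{\ast}\mathcal{A}^{\flat}_X$; here $\gamma^{1}_A(a)=A^2\wedge a+a\wedge A^2$ is regarded as the element of $T^{\ast}_A\mathcal{A}^{\flat}_X$ obtained by restricting the functional $\langle\,\cdot\,\mid b\rangle_X$ to $b\in T_A\mathcal{A}^{\flat}_X$. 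The first point is the reduction to the untwisted bracket: Lemma \ref{sec4:lem01} writes $\kappa_A(a,b,c)$ as a sum of three terms, each carrying a factor ${\rm d}_Aa$, ${\rm d}_Ab$ or ${\rm d}_Ac$, and $T_A\mathcal{A}^{\flat}_X=\{a\mid {\rm d}_Aa=0\}$, so $\kappa$ restricts to $0$ along $\mathcal{A}^{\flat}_X$ and the bracket (\ref{kappaCbracket}) agrees with the standard Courant bracket (\ref{sec3:Dorfman}) on sections valued in $T\mathcal{A}^{\flat}_X$. We also record the integrability of $T\mathcal{A}^{\flat}_X$ noted in the text: $[\boldsymbol{a},\boldsymbol{b}]$ is flat whenever $\boldsymbol{a},\boldsymbol{b}$ are, by ${\rm d}_A[a,b]=[{\rm d}_Aa,b]+[a,{\rm d}_Ab]$.

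For isotropy and maximality I would reuse the computation already made in the verification of condition (1) in the proof of Theorem \ref{sec4:prop_gamma'}: for $a,b\in T_A\mathcal{A}^{\flat}_X$,
\[
\langle \gamma^{1}_A(a)\mid b\rangle_X=\frac{1}{8\pi^{3}}\int_X{\rm tr}\,[(a\wedge b-b\wedge a)\wedge A^{2}],
\]
which is antisymmetric in $(a,b)$, so $\langle\gamma^{1}_A(a)\mid b\rangle_X+\langle\gamma^{1}_A(b)\mid a\rangle_X=0$ and $\mathcal{D}^{\flat}_X$ is isotropic for (\ref{sec3:pairing}). Maximality is the general fact that the graph of a skew bundle map is maximal isotropic: if $b\oplus\beta\in\mathbb{T}_A\mathcal{A}^{\flat}_X$ pairs to $0$ with every $a\oplus\gamma^{1}_A(a)$, then antisymmetry gives $\langle\beta-\gamma^{1}_A(b)\mid a\rangle_X=0$ for all $a\in T_A\mathcal{A}^{\flat}_X$, whence $\beta=\gamma^{1}_A(b)$ in $T^{\ast}_A\mathcal{A}^{\flat}_X$ and $b\oplus\beta$ lies in the fibre of $\mathcal{D}^{\flat}_X$.

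For involutivity the cleanest route is to observe that Theorem \ref{sec4:prop_gamma'} says the graph of $\gamma^{\,\prime}$ is a Dirac structure in the \emph{untwisted} $E_0(X)$, which is exactly the assertion that the $2$-form $\gamma^{\,\prime}_A(a,b)=\langle\gamma^{\,\prime}_A(a)\mid b\rangle_X$ on $\mathcal{A}_X$ is closed, $\tilde{\rm d}\gamma^{\,\prime}=0$; pulling back along $\iota:\mathcal{A}^{\flat}_X\hookrightarrow\mathcal{A}_X$ preserves closedness, $\tilde{\rm d}(\iota^{\ast}\gamma^{\,\prime})=0$, and $\mathcal{D}^{\flat}_X$ is by construction the graph of $\iota^{\ast}\gamma^{\,\prime}$; since $\kappa$ restricts to $0$ there, the graph of this closed $2$-form is a non‑twisted Dirac structure. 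Equivalently, one can argue directly: for flat vector fields $\boldsymbol{a},\boldsymbol{b},\boldsymbol{c}$ the computation in the proof of Theorem \ref{sec4:prop_gamma'}(2) goes through verbatim, the two expressions for $(\mathcal{L}_{\boldsymbol{a}}\gamma^{\,\prime}(\boldsymbol{b}))_A(c)$ and $(i_{\boldsymbol{b}}\tilde{\rm d}\gamma^{\,\prime}(\boldsymbol{a}))_A(c)$ differing only through $\partial_ab-\partial_ba=[\boldsymbol{a},\boldsymbol{b}](A)$, which is flat, so that $\llbracket\boldsymbol{a}\oplus\gamma^{1}(\boldsymbol{a}),\,\boldsymbol{b}\oplus\gamma^{1}(\boldsymbol{b})\rrbracket=[\boldsymbol{a},\boldsymbol{b}]\oplus\gamma^{1}([\boldsymbol{a},\boldsymbol{b}])\in\varGamma(\mathcal{D}^{\flat}_X)$.

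The main obstacle is not any single calculation but the interface between the ambient $\mathcal{A}_X$ and the submanifold $\mathcal{A}^{\flat}_X$: in the direct approach one must check that $\tilde{\rm d}$, the Lie derivative and the Lie bracket computed intrinsically on $\mathcal{A}^{\flat}_X$ coincide with their ambient analogues when fed flat arguments — i.e.\ that no derivative term detects how $\mathcal{A}^{\flat}_X$ is curved inside $\mathcal{A}_X$ — which is precisely why the pullback‑of‑a‑closed‑form argument is preferable. The remaining technical items, namely that $\gamma^{1}_A$ descends through the dual of $T_A\mathcal{A}^{\flat}_X\hookrightarrow T_A\mathcal{A}_X$ so that $\gamma^{1}_A(a)$ is a well‑defined element of $T^{\ast}_A\mathcal{A}^{\flat}_X$, and the non‑degeneracy of (\ref{sec3:pairing}) used for maximality, are to be handled in the Sobolev setting exactly as for the Dirac structures constructed earlier in the paper.
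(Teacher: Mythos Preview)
Your proposal is correct and follows essentially the same route as the paper: the paper's entire argument is the one sentence ``Lemma \ref{sec4:lem01} shows that the Cartan 3-form $\kappa$ on $\mathcal{A}^{\flat}_X$ vanishes,'' relying implicitly on Theorem \ref{sec4:prop_gamma'} (the graph of $\gamma^{\,\prime}$ is already a non-twisted Dirac structure on all of $\mathcal{A}_X$) together with the observation $\gamma^{1}_A=\gamma^{\,\prime}_A$ when $F_A=0$. Your write-up simply unpacks this, adding the maximality-of-a-graph remark and the restriction/pullback discussion that the paper leaves tacit; the underlying logic is identical.
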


We look at the boundary restriction of the Dirac manifold \( \mathcal{D}^{\flat}_X\, \) to \(M\). 
    Let \({\cal A}^{\flat}_M\) be the space of flat connections;
\[
{\cal A}^{\flat}_M=\{\,A\in{\cal A}_M\mid F_A=0\,\}.
\]    
 The tangent space of  \( {\cal A}^{\flat}_M\) at  \(A\in {\cal A}^{\flat}_M\)  is given by
\begin{equation*}
T_A{\cal A}^{\flat}_M=\{\,a\in\Omega^1_{s-1}(M,\,\mathfrak{g}\,)\mid {\rm d}_Aa=0\,\}.
\end{equation*}
Any principal \(G\)-bundle \(P\) over a 3-manifold \(M\) is extended to a  principal \(G\)-bundle \({\bf P}\) over \(X\), 
and any connection \(A\) on \(P\) has an extension to a connection \({\bf A}\) on \({\bf P}\). 
Then the boundary restriction map 
\(r:\,\mathcal {A}_X\,\longrightarrow \,\mathcal{A}_M\) 
is well defined and surjective.   
The tangent map of \(r\) at \({\bf A}\in{\cal A}_X\) is also given by the restriction to the boundary:
\[
(\partial r)_{{\bf A}} : T_{\bf A}{\mathcal A}_X=\Omega^1_{s-\frac12}(X,\mathfrak{g})\longrightarrow T_A{\cal A}_M=\Omega^1_{s-1}(M,\mathfrak{g}), \quad A=r({\bf A}). 
\] 
In \cite{K}, it is proved that the space of flat connections \(\mathcal{A}_X^{\flat}\) is mapped by \(r\) onto the space of flat connections over \(M\) 
that are of degree \(0\):
 \begin{equation*}
{\mathcal A}^{\,\flat,\,\deg 0}_M=\left\{A\in{\cal A}^{\flat}_M\Bigm| \int_M\!{\rm tr}~A^3=0\,\right\}.
\end{equation*}
Let \(\,\mathcal{D}^{\flat}_M\,\) and \(
\mathcal{D}^{\flat,\,\deg 0}_M\,\) 
 be the subbundles of   \(E^{\flat}(M)\) that are defined by 
\[
\mathcal{D}^{\flat}_M\,=\,\Bigl\{\,a\oplus\omega_A(a)\bigm| a\in T_A\mathcal{A}^{\flat}_M\,,\, A\in\mathcal{A}^{\flat}_M\,\Bigr\},
\]
and
\[
\mathcal{D}^{\flat,\,\deg 0}_M\,=\,\Bigl\{\,a\oplus \omega_A(a)\,\bigm| a\in T_A\mathcal{A}^{\flat,\,\deg 0}_M,\,A\in\mathcal{A}^{\flat,\,\deg 0}_M\,\Bigr\}\,.
\]

\begin{lem}~~For any \(a\in T_A{\mathcal{A}^{\flat}_X}\) we have 
\begin{equation}
\gamma^1_A(a)\,=\,\omega_{r(A)}(\,\partial r_A(a)\,)\,.
\end{equation}
\end{lem}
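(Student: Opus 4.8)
The identity is to be read, via the tangent map $\partial r_A\colon T_A\mathcal{A}^{\flat}_X\to T_{r(A)}\mathcal{A}^{\flat}_M$ (which is well defined because restriction to $M=\partial X$ commutes with ${\rm d}_A$), as the equality of cotangent vectors $\gamma^1_A(a)=(\partial r_A)^{\ast}\bigl(\omega_{r(A)}(\partial r_A(a))\bigr)$ in $T^{\ast}_A\mathcal{A}^{\flat}_X$. So the plan is to pair both sides with an arbitrary $b\in T_A\mathcal{A}^{\flat}_X$ and to reduce the resulting four-dimensional integral to a boundary integral over $M$ by the Stokes theorem. To begin, on $\mathcal{A}^{\flat}_X$ one has $F_A=0$, hence $\gamma^1_A(a)=\gamma^{\,\prime}_A(a)=A^2\wedge a+a\wedge A^2$, and the computation in the proof of Theorem~\ref{sec4:prop_gamma'} (verification of condition $(1)$) gives
\[
\langle \gamma^1_A(a)\mid b\rangle_X=\frac{1}{8\pi^3}\int_X{\rm tr}\,[(a\wedge b-b\wedge a)\wedge A^2]\,.
\]
Since $T^{\ast}_A\mathcal{A}^{\flat}_X$ is the dual of $T_A\mathcal{A}^{\flat}_X$, it is enough to test against $b$ with ${\rm d}_Ab=0$; this is what will permit the use of the flatness of the test vector in the next step.

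The heart of the argument is the pointwise identity
\[
{\rm tr}\,[(a\wedge b-b\wedge a)\wedge A^2]={\rm d}\,{\rm tr}\,[(a\wedge b-b\wedge a)\wedge A]\,,
\]
valid whenever $F_A=0$ and ${\rm d}_Aa={\rm d}_Ab=0$. I would prove it by expanding the right-hand side with the Leibniz rule and inserting the flatness relations ${\rm d}A=-A^2$, ${\rm d}a=-(A\wedge a+a\wedge A)$ and ${\rm d}b=-(A\wedge b+b\wedge A)$. Writing $\eta=a\wedge b-b\wedge a$, a short computation of the type in the proof of Lemma~\ref{sec4:lem01} gives ${\rm d}\eta=\eta\wedge A-A\wedge\eta$; then one further use of the graded cyclicity of the trace collapses ${\rm tr}\,[({\rm d}\eta)\wedge A]$ to $2\,{\rm tr}\,[\eta\wedge A^2]$, while ${\rm tr}\,[\eta\wedge{\rm d}A]=-{\rm tr}\,[\eta\wedge A^2]$, so that their sum is ${\rm tr}\,[\eta\wedge A^2]$, as claimed. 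The only genuine obstacle here is the sign bookkeeping --- keeping straight the graded-cyclic moves of the degree-one factor $A$ past the degree-three factors; nothing deeper is involved.

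Granting the identity, the Stokes theorem gives
\[
\int_X{\rm tr}\,[(a\wedge b-b\wedge a)\wedge A^2]=\int_M{\rm tr}\,[(\bar a\wedge\bar b-\bar b\wedge\bar a)\wedge\bar A]\,,
\]
where the bar denotes restriction to $M$, so that $\bar A=r(A)$, $\bar a=\partial r_A(a)$ and $\bar b=\partial r_A(b)$. Rearranging the integrand on the right exactly as in Lemma~\ref{sec4:lem01}, ${\rm tr}\,[(\bar a\wedge\bar b-\bar b\wedge\bar a)\wedge\bar A]={\rm tr}\,[(\bar A\wedge\bar a-\bar a\wedge\bar A)\wedge\bar b]={\rm tr}\,[\omega_{r(A)}(\partial r_A(a))\wedge\partial r_A(b)]$, which is the integrand defining $\langle\,\omega_{r(A)}(\partial r_A(a))\mid\partial r_A(b)\,\rangle_M$. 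Hence $\langle\gamma^1_A(a)\mid b\rangle_X=\langle\,\omega_{r(A)}(\partial r_A(a))\mid\partial r_A(b)\,\rangle_M$ for every $b\in T_A\mathcal{A}^{\flat}_X$, which is exactly the asserted equality $\gamma^1_A(a)=\omega_{r(A)}(\partial r_A(a))$ in the cotangent space.
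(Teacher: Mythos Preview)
Your proof is correct and follows essentially the same route as the paper: pair both sides with a test vector $b\in T_A\mathcal{A}^{\flat}_X$, use Stokes to pass between the integral over $X$ and the boundary integral over $M$, and invoke the pointwise identity ${\rm d}\,{\rm tr}[(ab-ba)A]={\rm tr}[(ab-ba)A^2]$ under the flatness hypotheses $F_A=0$, ${\rm d}_Aa={\rm d}_Ab=0$. The only cosmetic differences are that the paper runs the Stokes argument from the $M$-side to the $X$-side rather than the reverse, and that you spell out the derivation of the key identity (and the interpretation of the equality via $(\partial r_A)^{\ast}$) more explicitly than the paper does.
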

\begin{proof}~~~
Let \( A\in \mathcal{\mathcal{A}}^{\flat}_X\) and \(a,\,b \in T_{\bf A}{\mathcal{A}^{\flat}_X}\,\).
   Put \(A^{\prime}=r(A)\in {\mathcal{A}^{\flat}_M}\),  \(\,\bar a=\partial r(a)\in T_{A^{\prime}}{\mathcal{A}^{\flat}_M}\,\) and \(\bar b=\partial r(b)\in T_{A^{\prime}}{\mathcal{A}^{\flat}_M}\,\).   We have 
\[\langle \omega_{A^{\prime}}(\bar a)\mid \bar b\rangle
= q\int_M\,{\rm tr}~[ (\bar a\,\bar b- \bar b\, \bar a)\,A^{\prime}]
=q\int_X\,{\rm d}\,{\rm tr}~[(ab- ba)\,A]\, .\,
\]
Since \(F_A=0\) and \({\rm d}_Aa={\rm d}_Ab=0\) for \(\,a,b\in T_A\mathcal{A}_X^{\flat}\,\), by a similar calculation as in (\ref{prlemma3.4}) we find that  
the last formula is equal to 
\[
q\int_X\,{\rm tr}~[(ab- ba)\,{\rm d}_AA\,]\, =\,q\int_X\,{\rm tr}~[(ab- ba)\,A^2\,]\,=\langle \gamma^1_A(a)\mid b\rangle \,.\]
\end{proof}

From the discussion hitherto we are convinced to have the following consequences.

Let \(\,E^{\flat}(M)=\,\mathbb{T}\mathcal{A}^{\flat}_M\,\) be the standard Courant algebroid over the space of flat connections \(\,\mathcal{A}^{\flat}_M\,\).   

\begin{thm}~~
\begin{enumerate}[\quad \rm (1)]
\item 
\(\mathcal{D}^{\flat}_M\,\) 
is a Dirac structure over \(\mathcal{A}^{\flat}_M\,\). 

\item 
\(\mathcal{D}^{\flat,\,\deg 0}_M\,\) 
is a Dirac structure over \(\mathcal{A}^{\flat,\,\deg 0}_M\,\). 
\item
The boundary restriction map \(r\) implies an isomorphism  
between the Dirac structures \(\mathcal{D}_X^{\flat}\,\) and \(\,\mathcal{D}^{\flat,\,\deg 0}_M\).
\end{enumerate}
\end{thm}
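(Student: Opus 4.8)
The plan is to obtain (1) by restricting the $\kappa$-twisted Dirac structure $\mathcal{D}_M$ built above to the flat locus and checking that the twist dies there, and to obtain (3) --- and with it (2) --- by transporting $\mathcal{D}^{\flat}_X$ along the boundary map $r$ by means of the identity $\gamma^1_A(a)=\omega_{r(A)}(\partial r_A(a))$ just proved. For (1): $\mathcal{D}^{\flat}_M$ is the graph over $\mathcal{A}^{\flat}_M$ of the bundle map $a\mapsto\omega_A(a)=A\wedge a-a\wedge A$, now regarded as a map $T\mathcal{A}^{\flat}_M\to T^{\ast}\mathcal{A}^{\flat}_M$; the skew-symmetry of $(a,b)\mapsto\langle\omega_A(a)\mid b\rangle_M$ --- the computation proving Lemma~\ref{isotrop}, which uses nothing but $a,b\in T_A\mathcal{A}_M$ --- makes this graph isotropic, hence maximal isotropic for $\langle\,\cdot\,,\,\cdot\,\rangle_+$. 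Re-running the computation of Lemma~\ref{Liebracket} with flat vector fields $\boldsymbol a,\boldsymbol b$ on $\mathcal{A}^{\flat}_M$ gives $\mathcal{L}_{\boldsymbol a}\omega(\boldsymbol b)-i_{\boldsymbol b}\tilde{\rm d}\omega(\boldsymbol a)-i_{\boldsymbol a\wedge\boldsymbol b}\kappa=\omega([\boldsymbol a,\boldsymbol b])$, and $[\boldsymbol a,\boldsymbol b]$ is again flat because ${\rm d}_A[a,b]=[{\rm d}_Aa,b]+[a,{\rm d}_Ab]$, so the sections of $\mathcal{D}^{\flat}_M$ are closed under the $\kappa$-twisted bracket. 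To promote this to closure under the \emph{untwisted} bracket of $E^{\flat}(M)$ I need the Cartan 3-form to vanish along $\mathcal{A}^{\flat}_M$, i.e. $\int_M{\rm tr}[(ab-ba)c]=0$ whenever $F_A=0$ and ${\rm d}_Aa={\rm d}_Ab={\rm d}_Ac=0$; this is the boundary counterpart of Lemma~\ref{sec4:lem01} --- pass to a four-manifold $X$ cobounding $M$, write the integral over $M$ as $\int_X{\rm d}\,{\rm tr}[(ab-ba)c]$ by Stokes, expand it by (\ref{prlemma3.4}), and observe that every resulting term carries a ${\rm d}_A$ of one of the three flat arguments.

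For (3) I would use that $r$ restricts to a diffeomorphism $\mathcal{A}^{\flat}_X\xrightarrow{\sim}\mathcal{A}^{\flat,\deg 0}_M$: from \cite{K} the image of $r$ is exactly $\mathcal{A}^{\flat,\deg 0}_M$, and the tangent map $\partial r_A$ is the restriction of $\mathfrak{g}$-valued forms to $M$. Its canonical lift $\Phi=\partial r\oplus(\partial r^{-1})^{\ast}$ to $T\mathcal{A}\oplus T^{\ast}\mathcal{A}$ is an isomorphism of the standard Courant algebroids $E^{\flat}(X)\to E^{\flat}(M)$ --- both untwisted, since $\kappa$ vanishes on the two flat loci --- so only the graphs must be matched. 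For $a\in T_A\mathcal{A}^{\flat}_X$ one has $\Phi\bigl(a\oplus\gamma^1_A(a)\bigr)=\partial r_A(a)\oplus(\partial r_A^{-1})^{\ast}\gamma^1_A(a)$, and the Lemma above, read as the pairing identity $\langle\gamma^1_A(a)\mid b\rangle_X=\langle\omega_{r(A)}(\partial r_Aa)\mid\partial r_Ab\rangle_M$ valid for all $b\in T_A\mathcal{A}^{\flat}_X$, says precisely that $(\partial r_A^{-1})^{\ast}\gamma^1_A(a)=\omega_{r(A)}(\partial r_Aa)$. Since $\partial r_A(a)$ runs over all of $T_{r(A)}\mathcal{A}^{\flat,\deg 0}_M$ as $a$ runs over $T_A\mathcal{A}^{\flat}_X$, $\Phi$ carries $\mathcal{D}^{\flat}_X$ bijectively onto $\mathcal{D}^{\flat,\deg 0}_M$, which is (3). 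Then (2) follows, since being a Dirac structure is preserved by Courant-algebroid isomorphisms; alternatively (2) is just the restriction of (1) to the open-and-closed subset $\mathcal{A}^{\flat,\deg 0}_M\subset\mathcal{A}^{\flat}_M$, because $\partial_a\!\int_M{\rm tr}[A^3]=3\int_M{\rm tr}[a\wedge A^2]$ vanishes for ${\rm d}_Aa=0,\ F_A=0$, so $\deg$ is locally constant on $\mathcal{A}^{\flat}_M$.

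The algebra throughout is routine; the two substantive inputs --- and so the points I expect to be the main obstacle --- are (a) the vanishing of the Cartan 3-form along the flat connections, which is the only place the topology of $M$ (and $X$) enters and which I would settle by the cobordism/Stokes argument above as in Lemma~\ref{sec4:lem01}, and (b) the regularity of the boundary restriction: that $r$ is a genuine diffeomorphism $\mathcal{A}^{\flat}_X\xrightarrow{\sim}\mathcal{A}^{\flat,\deg 0}_M$ with $\partial r$ an isomorphism of tangent spaces, not merely a surjection --- a fact resting on the analysis of \cite{K} and on the (tacitly assumed) smooth Banach-manifold structure of the spaces of flat connections.
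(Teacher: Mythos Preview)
Your strategy matches the paper's implicit one --- the paper offers no proof beyond ``from the discussion hitherto we are convinced to have the following consequences'' and relies on the preceding Lemma together with the earlier twisted/untwisted Dirac-structure results --- so in outline you are doing exactly what the paper intends.  But your proposed resolution of obstacle (a) does not go through.  To run the Stokes argument you need extensions of $A$ and of $a,b,c$ to $X$ that are \emph{flat on all of $X$}: formula~(\ref{prlemma3.4}) expresses $\int_M\mathrm{tr}[(ab-ba)c]$ as a sum of $X$-integrals each containing a factor $d_{\mathbf A}a$, $d_{\mathbf A}b$ or $d_{\mathbf A}c$, where $d_{\mathbf A}$ is the covariant derivative of the \emph{extended} connection $\mathbf A$.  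An arbitrary extension of a $d_A$-closed $1$-form on $M$ is not $d_{\mathbf A}$-closed on $X$, so those bulk terms need not vanish.  Even when $A\in\mathcal{A}^{\flat,\deg 0}_M$ admits a flat extension $\mathbf A$, producing $d_{\mathbf A}$-closed extensions of $a,b,c$ amounts to surjectivity of the restriction $\ker d_{\mathbf A}|_{\Omega^1(X)}\to\ker d_A|_{\Omega^1(M)}$, which you have not established; and for $A\notin\mathcal{A}^{\flat,\deg 0}_M$ no flat extension of $A$ exists at all.  (The paper's Lemma~\ref{sec4:lem01} kills $\kappa$ only on $T\mathcal{A}^{\flat}_X$, and no separate argument for $T\mathcal{A}^{\flat}_M$ is given there either.)

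Your obstacle (b) is likewise unresolved: $\partial r_A:T_A\mathcal{A}^{\flat}_X\to T_{r(A)}\mathcal{A}^{\flat,\deg 0}_M$ has infinite-dimensional kernel --- for any $\xi\in\Omega^0(X,\mathfrak g)$ with $\xi|_M=0$ one has $d_{\mathbf A}\xi\in\ker\partial r_A$ since $d_{\mathbf A}^2\xi=[F_{\mathbf A},\xi]=0$ --- so $r$ is not a diffeomorphism and the map $\Phi=\partial r\oplus(\partial r^{-1})^{\ast}$ you write down is not defined.  The paper's phrase ``$r$ implies an isomorphism'' must therefore be read more loosely than a literal Courant-algebroid isomorphism, and making that precise is work that neither you nor the paper carries out.
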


\section{The action of the group of gauge transformations on Dirac spaces}

Let \(G=S\!U(n)\) and \(P\longrightarrow N\) be a \(G\)-principal bundle over a \(n\)-dimensional Riemannian manifold $N$ as in subsection 2.1.    
Let \(\mathrm{Ad}\,P\) be the fiber bundle \(\mathrm{Ad}\,P=P\times_GG\longrightarrow\,N\).   
The group of smooth sections of \(\mathrm{Ad}\,P\) under the fiber-wise multiplication is called {\it the group of gauge transformations}.    
We denote by \({\mathcal G}^{\prime}_N\) the group of \(L^2_{s}\)-gauge transformations, i.e., 
\(\,{\mathcal G}^{\prime}_N=\,\Omega^0_{s}(N, \mathrm{Ad}\,P)\, \).
  \({\mathcal G}^{\prime}_N\) acts on \({\mathcal A}_N\) by 
\[\,
g\cdot A=g^{-1}\mathrm{d}g+g^{-1}Ag=A+g^{-1}\mathrm{d}_Ag\,,\quad g\in {\mathcal G}^{\prime}_N . 
\]
\({\cal G}^{\,\prime}_N\) is a Hilbert Lie\,Group  and its action is a smooth map of Hilbert manifolds.      
The action of \(g\in {\mathcal G}^{\prime}_N\) on the tangent space \(T_A\mathcal{A}_N\) is given by 
\[ {\rm Ad}_{g^{-1}}:\,T_A\mathcal{A}_N\ni\,a\,\longmapsto\, {\rm Ad}_{g^{-1}}a=g^{-1}ag\,\in T_A\mathcal{A}_N\,.\]
 We choose a fixed point \(p_0\in N\) and deal with the group of gauge transformations that are identity at \(p_0\):
\[ {\cal G}_N
=\{g\in {\cal G}^{\,\prime}_N\,\mid \,g(p_0)=1\,\}.
\] 
  \({\cal G}_N\) acts freely on \({\mathcal A}_N\).    
We have 
    \(\,{\rm Lie}\,{\cal G}_N\,=\Omega^0_{s}(N, {\rm ad}\,P)\,\).   
The action of \({\rm Lie}\,{\cal G}_N\) on \({\mathcal A}_N\) is given by the covariant exterior derivative:
\[\,
{\rm d}_A={\rm d}+[A\wedge\, \cdot\,]:\, {\rm Lie}\,\mathcal{G}\,=\Omega^0_{s}(N, {\rm ad}\,P)\ni \xi\,\longrightarrow 
{\rm d}_A\xi\in \Omega^1_{s-1}(N,{\rm ad}\,P)=T_A\mathcal{A}_N\,.\]
 So, the fundamental vector field on \({\mathcal A}_N\) corresponding to  
\(\xi \in {\rm Lie}\,{\cal G}_N\) is given by 
\[
{\rm d}_A\xi\,=\left.\frac{d}{dt}\right|_{t=0}\hspace{-.25em}(\exp \,t\xi)\cdot A.
\]    
\(\ker\,{\rm d}_A\,=\,0\) because \(A\in \mathcal{A}_N\) is irreducible by our assumption.     
The tangent space to the orbit at \(A\in{\cal A}_N\) is 
\begin{equation*}
T_A({\cal G}_N\cdot A)=\{{\rm d}_A\xi\,\mid\,\xi\in \Omega^0_{s}(N, {\rm ad}\,P)\}.
\end{equation*}
 Moreover, \(\{{\rm d}_A\xi \,|\,\xi\in {\rm Lie}\,\mathcal{G}_N\}\) is tangent to the space of flat connections \({\cal A}^{\flat}_N\,\),  because 
\({\rm d}_A({\rm d}_A\xi)=\,[\,F_A,\,\xi\,]\,=0\) on \({\cal A}^{\flat}_N\,\).   
Evidently, the condition \(F_A=0\) is \(\mathcal{G}_N\)-invariant, so 
 \({\cal A}^{\flat}_N\) is also \({\cal G}_N-\)invariant manifold,

Now we restrict ourselves to the case treated previously, that is, \(N=X\) is a four-manifold or \(N=M\) is the boundary three-manifold \(M=\partial\,X\).

\subsection{   \(\mathcal{D}^{\,\phi}_X\) under the action of  \(\mathcal{G}_X\) }

   Let  \(\phi_A:\,T_A\mathcal{A}_X\longrightarrow\,T^{\ast}_A\mathcal{A}_X\) be the bundle homomorphism 
\(\phi_A(a)=F_A\wedge a + a\wedge F_A\,\) in (\ref{sec3:eqn_bundle homomorphism}).    
 Since \(F_{g\cdot A}=g^{-1}F_Ag\), we have 
 \begin{equation}
 \phi_{g\cdot A}=g^{-1}\phi_A\,g\,.
  \end{equation}
 We also see that \(\kappa\) is invariant under \(\mathcal{G}_X\).   
 Therefore we have the following; 
\begin{prop}~~~
 The \(\kappa\)-twisted Dirac manifold \(\mathcal{D}^{\,\phi}_X\) is invariant under the action of \(\mathcal{G}_X\).
 \end{prop}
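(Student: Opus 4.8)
The plan is to realize the action of $g\in\mathcal{G}_X$ on $\mathcal{D}^{\,\phi}_X$ as the restriction of an automorphism of the Courant algebroid $E_\kappa(X)$ covering the diffeomorphism $\Phi_g\colon A\mapsto g\cdot A$ of $\mathcal{A}_X$, and then to check that this automorphism carries the subbundle $\mathcal{D}^{\,\phi}_X$ onto itself. First I would make the lifted action explicit. The derivative of $\Phi_g$ at $A$ is $\mathrm{Ad}_{g^{-1}}\colon a\mapsto g^{-1}ag$, as recorded above, so it maps $T_A\mathcal{A}_X$ to $T_{g\cdot A}\mathcal{A}_X$ by conjugation. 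Dually, because the pairing $\langle\alpha\mid a\rangle_X=\frac1{8\pi^3}\int_X\mathrm{tr}(\alpha\wedge a)$ is invariant under simultaneous conjugation of $\alpha$ and $a$ by the $0$-form $g$ (cyclicity of $\mathrm{tr}$), the induced action on $T^*\mathcal{A}_X$ is again conjugation, $\alpha\mapsto g^{-1}\alpha g$. Hence $g$ acts on $E_0(X)$ by $a\oplus\alpha\mapsto g^{-1}ag\oplus g^{-1}\alpha g$ over $\Phi_g$, and this action manifestly preserves both the bilinear form $\langle\cdot,\cdot\rangle_+$ and the anchor $\rho$.

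Next I would verify that this action is an automorphism of the modified Courant algebroid $E_\kappa(X)$. The bracket $\llbracket\cdot,\cdot\rrbracket$ of the standard Courant algebroid is assembled from the Lie bracket of vector fields, the Lie derivative, and the exterior derivative $\tilde{\mathrm d}$, all of which are natural under the diffeomorphism $\Phi_g$, so $\Phi_g$ intertwines $\llbracket\cdot,\cdot\rrbracket$ with itself; for the additional term $-\,i_{\boldsymbol a\wedge\boldsymbol b}\kappa$ appearing in $\llbracket\cdot,\cdot\rrbracket_\kappa$ one only needs $\Phi_g^{\ast}\kappa=\kappa$, which is precisely the $\mathcal{G}_X$-invariance of $\kappa$ noted just above. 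Thus $g$ acts as a Courant automorphism of $E_\kappa(X)$; in particular it sends maximal isotropic, involutive subbundles to maximal isotropic, involutive subbundles.

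Finally I would show that $\mathcal{D}^{\,\phi}_X$ is invariant. The fiber of $\mathcal{D}^{\,\phi}_X$ over $A$ is the graph $\{\,a\oplus\phi_A(a)\mid a\in T_A\mathcal{A}_X\,\}$, and its image under $g$ is $\{\,g^{-1}ag\oplus g^{-1}\phi_A(a)g\,\}$, lying over $g\cdot A$. By the displayed identity $\phi_{g\cdot A}=g^{-1}\phi_A\,g$ — equivalently $\phi_{g\cdot A}(g^{-1}ag)=g^{-1}\phi_A(a)g$, which is immediate from $F_{g\cdot A}=g^{-1}F_Ag$ — this image is exactly $\{\,b\oplus\phi_{g\cdot A}(b)\mid b\in T_{g\cdot A}\mathcal{A}_X\,\}$, the fiber of $\mathcal{D}^{\,\phi}_X$ over $g\cdot A$. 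Since $A\mapsto g\cdot A$ is a bijection of $\mathcal{A}_X$, the Courant automorphism attached to $g$ sends $\mathcal{D}^{\,\phi}_X$ onto $\mathcal{D}^{\,\phi}_X$, which is the asserted $\mathcal{G}_X$-invariance. I do not anticipate a genuine obstacle: the only slightly delicate points are the identification of the cotangent action with conjugation and the naturality of the Dorfman-type bracket in the Banach-manifold setting, and both are formal consequences of the conventions fixed in Section 2.
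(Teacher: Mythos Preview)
Your proof is correct and rests on exactly the two ingredients the paper invokes just before the proposition: the equivariance $\phi_{g\cdot A}=g^{-1}\phi_A\,g$ (from $F_{g\cdot A}=g^{-1}F_Ag$) and the $\mathcal{G}_X$-invariance of $\kappa$. The paper simply records these facts and declares the result, whereas you have spelled out the intermediate steps---identifying the induced action on $T^*\mathcal{A}_X$ as conjugation, checking that the lifted map is a Courant automorphism of $E_\kappa(X)$, and then matching fibers of the graph---so your argument is essentially a fleshed-out version of the same approach.
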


 \subsection{  \(\mathcal{D}_M\) under the infinitesimal action of  \(\mathcal{G}_M\) }
 
   In \cite{K} we saw that 
 the action of \({\cal G}_M\) on the space of flat connections \({\cal A}^{\flat}_M\) is infinitesimally pre-symplectic, that is, 
the Lie derivative of the 2-form $\omega$ in (\ref{presympform2}) by the fundamental vector field \({\rm d}_A\xi\) vanishes:
 \begin{equation}\label{sec4.2:eqn_presymp}
 \mathcal{L}_{{\rm d}_A\xi }\,\omega=0\,,\quad A\in {\cal A}^{\flat}_M\,, 
 \end{equation}
The counterpart of this assertion for the infinitesimal action of \({\cal G}_M\) on the Dirac structure \(\mathcal{D}^{\flat}_M\) holds as well.   We shall explain it in the following.    

We have the canonical skew symmetric form $\Lambda_0$ on \(E_0(M)=T\mathcal{A}_M \oplus T^*\mathcal{A}_M\,\): 
 \begin{equation*}
\Lambda_0 (a\oplus \alpha\,\mid \,b\oplus \beta)\coloneqq \frac{1}{2}\,\bigl\{\,\langle \alpha\mid b\rangle_M - \langle \beta\mid a\rangle_M\,\bigr\} ,
\end{equation*}  
(see (\ref{standard2form})).
We know that the space of connections $\mathcal{A}_M$ admits the Dirac structure 
 \begin{equation*}
 \mathcal{D}_M =\coprod_{A\in\mathcal{A}_M}
  \Bigl\{\,a\oplus \,\omega_A(a)\,\bigm|\,a\in T_A\mathcal{A}_M \,\Bigr\},\quad \,\omega_A(a)=Aa-aA\,.
\end{equation*} 
whose anchor map is \(\rho:\,E_0(M)\,\longrightarrow T\mathcal{A}_M\,\)~(see (\ref{sec3:dfn of D_M})). 
      
 \begin{lem}\label{lambda0omega}~~~
 Restricted to $\mathcal{D}_M$ the canonical 2-form $\Lambda_0$ is the pull back of the pre-symplectic form \(\omega\) by \(\rho\,\):
$\, \Lambda_0\vert \mathcal{D}_M\,=\,\rho^{\ast}\omega $.
 \end{lem}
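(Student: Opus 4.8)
The plan is to evaluate both sides of the asserted identity on an arbitrary pair of sections of $\mathcal{D}_M$ and check that they agree pointwise on $\mathcal{A}_M$. Every section of $\mathcal{D}_M$ is of the form $\tilde{\boldsymbol{a}}=\boldsymbol{a}\oplus\omega(\boldsymbol{a})$ for a vector field $\boldsymbol{a}$ on $\mathcal{A}_M$, and $\rho(\tilde{\boldsymbol{a}})=\boldsymbol{a}$; hence for two such sections the right-hand side is, by definition of pull-back, $(\rho^{\ast}\omega)(\tilde{\boldsymbol{a}},\tilde{\boldsymbol{b}})(A)=\omega_A(\boldsymbol{a}(A),\boldsymbol{b}(A))$, and it remains to show $\Lambda_0(\tilde{\boldsymbol{a}}\mid\tilde{\boldsymbol{b}})(A)=\omega_A(a,b)$ with $a=\boldsymbol{a}(A)$, $b=\boldsymbol{b}(A)$.

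First I would unwind the left-hand side from the definition of $\Lambda_0$ and of $\omega_A$:
\[
\Lambda_0(\tilde{\boldsymbol{a}}\mid\tilde{\boldsymbol{b}})(A)=\tfrac12\bigl\{\langle\omega_A(a)\mid b\rangle_M-\langle\omega_A(b)\mid a\rangle_M\bigr\}
=\tfrac{q}{2}\int_M{\rm tr}\bigl[(Aa-aA)\wedge b\bigr]-\tfrac{q}{2}\int_M{\rm tr}\bigl[(Ab-bA)\wedge a\bigr].
\]
The key step is then to rewrite each of the two integrands in the common form ${\rm tr}[(ab-ba)\wedge A]$. This uses only (i) cyclicity of the matrix trace and (ii) the graded-commutativity rule ${\rm tr}(\xi\wedge\eta)=(-1)^{|\xi||\eta|}{\rm tr}(\eta\wedge\xi)$ for matrix-valued forms; since $A$, $a$, $b$ are all $1$-forms, moving a $1$-form past a $2$-form costs no sign, so ${\rm tr}[Aab]={\rm tr}[abA]$, ${\rm tr}[aAb]={\rm tr}[baA]$, etc. One obtains $\langle\omega_A(a)\mid b\rangle_M=q\int_M{\rm tr}[(ab-ba)A]=\omega_A(a,b)$ and $\langle\omega_A(b)\mid a\rangle_M=-\omega_A(a,b)$, whence $\Lambda_0(\tilde{\boldsymbol{a}}\mid\tilde{\boldsymbol{b}})(A)=\tfrac12\{\omega_A(a,b)-(-\omega_A(a,b))\}=\omega_A(a,b)$, which is exactly $(\rho^{\ast}\omega)(\tilde{\boldsymbol{a}},\tilde{\boldsymbol{b}})(A)$.

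The proof is not conceptually deep; the only thing to watch is the sign bookkeeping in reordering wedge products of $\mathfrak{g}$-valued $1$-forms under the trace — the same manipulations that made $\mathcal{D}_M$ isotropic in Lemma \ref{isotrop}. The one point worth checking carefully is that, because of the minus sign in the definition of $\Lambda_0$, the two terms do \emph{not} cancel (as they do in the symmetric pairing $\langle\cdot,\cdot\rangle_+$) but reinforce each other, doubling $\omega_A(a,b)$ and absorbing the factor $\tfrac12$.
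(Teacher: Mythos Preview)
Your proof is correct and follows essentially the same approach as the paper's own proof: evaluate $\Lambda_0$ on a pair of sections $\boldsymbol{a}\oplus\omega(\boldsymbol{a})$, $\boldsymbol{b}\oplus\omega(\boldsymbol{b})$ of $\mathcal{D}_M$, use trace cyclicity to rewrite each term as $q\int_M{\rm tr}[(ab-ba)A]$, and identify the result with $\omega_A(a,b)=(\rho^{\ast}\omega)(\tilde{\boldsymbol{a}},\tilde{\boldsymbol{b}})(A)$. You are simply more explicit than the paper about the sign bookkeeping in the trace manipulations, which the paper compresses into a single line.
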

\begin{proof}
For any section $\boldsymbol{a}\oplus \omega(\boldsymbol{a}), \boldsymbol{b}\oplus\omega(\boldsymbol{b})$ of $\mathcal{D}_M$,  we have 
\begin{eqnarray*}
\Lambda_0\left ( a\oplus \,\omega_A(a)\,\bigm|\,b\oplus \,\omega_A(b)\,\right)
&=&
\frac{1}{2}\,\bigl\{\,\langle Aa-aA\,\mid b\rangle - \langle Ab-bA\,\mid a\rangle\,\bigr\} \\[0.2cm]
&=&\,q\,\int_M\,{\rm tr}~[ (ab\,- \,ba\,)\,A]\,=\,\omega_A(a\,,\,b\,)\,,
\end{eqnarray*}
which is equal to 
\(\,(\rho^{\ast}\omega)( a\oplus \,\omega_A(a)\,,\,b\oplus \,\omega_A(b)\,)\).
\end{proof}

\(\,{\rm Lie}\,\mathcal{G}_M\) acts on  \(\mathcal{D}_M\subset \,E_0(M)\) by the fundamental vector field 
\[
\mathbf{v}_{\xi}\,=\,{\rm d}_A\xi\oplus \omega_A({\rm d}_A\xi)\,, \quad \xi\in {\rm Lie}\,\mathcal{G}_M\,,\,A\in \mathcal{A}_M\,.
\]
The derivation of the 2-form \(\Lambda_0\) along the orbit of \({\rm Lie}\,\mathcal{G}_M\) is given by the Lie derivation
\( \mathcal{L}_{\mathbf{v}_{\xi}(A)}\,\Lambda_0\,\). 
Then, we have the following result. 
\begin{prop}\label{sec4-2:prop_vanishing}~~~
It holds that $\mathcal{L}_{\mathbf{v}_{\xi}(A)}\,\Lambda_0 = 0$ at each $A\in\mathcal{A}^{\flat}_M\,.$    Hence the action of  \({\cal G}_M\) on the Dirac manifold \(\mathcal{D}^{\flat}_M\) is infinitesimally symplectic. 
\end{prop}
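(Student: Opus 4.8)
The plan is to deduce the vanishing of $\mathcal{L}_{\mathbf{v}_{\xi}}\Lambda_0$ from the already known infinitesimal pre-symplecticity (\ref{sec4.2:eqn_presymp}) of $\omega$, using Lemma \ref{lambda0omega} as the link. First I would observe that, under the identification of $\mathcal{D}_M$ with $T\mathcal{A}_M$ furnished by the anchor $\rho$, the section $\mathbf{v}_{\xi}={\rm d}_A\xi\oplus\omega_A({\rm d}_A\xi)$ is exactly the tangent lift of the fundamental vector field $A\mapsto{\rm d}_A\xi$ of the $\mathcal{G}_M$-action on $\mathcal{A}_M$; in particular it is tangent to $\mathcal{D}_M$ and $\rho$-related to ${\rm d}_{\bullet}\xi$, so the flow of $\mathbf{v}_{\xi}$ preserves $\mathcal{D}_M$ and it makes sense to evaluate $\mathcal{L}_{\mathbf{v}_{\xi}}$ on the restricted form $\Lambda_0\vert\mathcal{D}_M$. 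By Lemma \ref{lambda0omega} this restricted form equals $\rho^{\ast}\omega$, so naturality of the Lie derivative under $\rho$-related fields gives
\[
\mathcal{L}_{\mathbf{v}_{\xi}}\bigl(\Lambda_0\vert\mathcal{D}_M\bigr)\,=\,\mathcal{L}_{\mathbf{v}_{\xi}}\bigl(\rho^{\ast}\omega\bigr)\,=\,\rho^{\ast}\bigl(\mathcal{L}_{{\rm d}_{\bullet}\xi}\,\omega\bigr).
\]
Invoking (\ref{sec4.2:eqn_presymp}), the factor $\mathcal{L}_{{\rm d}_A\xi}\,\omega$ vanishes at every $A\in\mathcal{A}^{\flat}_M$, hence so does $\mathcal{L}_{\mathbf{v}_{\xi}}\Lambda_0$ there; the closing statement that the $\mathcal{G}_M$-action on $\mathcal{D}^{\flat}_M$ is infinitesimally symplectic is then immediate from the definition.

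If one prefers a self-contained argument not leaning on \cite{K}, I would instead compute directly. Using $\Lambda_0\bigl(a\oplus\omega_A(a),\,b\oplus\omega_A(b)\bigr)=\omega_A(a,b)=q\int_M{\rm tr}\,[(ab-ba)A]$ from Lemma \ref{lambda0omega}, one expands
\[
(\mathcal{L}_{\mathbf{v}_{\xi}}\omega)(\mathbf{b},\mathbf{c})=\partial_{\mathbf{v}_{\xi}}\bigl(\omega(\mathbf{b},\mathbf{c})\bigr)-\omega\bigl([\mathbf{v}_{\xi},\mathbf{b}],\mathbf{c}\bigr)-\omega\bigl(\mathbf{b},[\mathbf{v}_{\xi},\mathbf{c}]\bigr),
\]
with $\partial_{\mathbf{v}_{\xi}}A={\rm d}_A\xi$ and $\partial_{\mathbf{b}}\mathbf{v}_{\xi}=[\,\mathbf{b}(A)\wedge\xi\,]$, collects the resulting integrals, and uses $F_A=0$ together with ${\rm d}_A({\rm d}_A\xi)=[F_A,\xi]=0$ on $\mathcal{A}^{\flat}_M$ to see that the surviving terms cancel in pairs, exactly as in the proof of Lemma \ref{Liebracket}. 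This merely reproduces at the level of the graph $\mathcal{D}_M$ the computation behind (\ref{sec4.2:eqn_presymp}).

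The only point needing genuine care is the naturality step $\mathcal{L}_{\mathbf{v}_{\xi}}\rho^{\ast}\omega=\rho^{\ast}\mathcal{L}_{{\rm d}_{\bullet}\xi}\omega$: one must check that $\mathbf{v}_{\xi}$ indeed projects to ${\rm d}_{\bullet}\xi$ under $\rho$ as a vector field, which is immediate from $\rho(a\oplus\alpha)=a$ and the shape of $\mathbf{v}_{\xi}$, and that $\mathbf{v}_{\xi}$ is tangent to the submanifold $\mathcal{D}_M$, so that restriction commutes with $\mathcal{L}_{\mathbf{v}_{\xi}}$. I do not anticipate a real obstacle here; the substance of the statement is carried entirely by Lemma \ref{lambda0omega} together with (\ref{sec4.2:eqn_presymp}).
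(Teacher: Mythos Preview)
Your proposal is correct and follows essentially the same route as the paper: the paper's proof consists of the single chain
\[
\mathcal{L}_{\mathbf{v}_{\xi}(A)}\Lambda_0=\mathcal{L}_{{\rm d}_A\xi\oplus\omega_A({\rm d}_A\xi)}(\rho^{\ast}\omega)=\rho^{\ast}(\mathcal{L}_{{\rm d}_A\xi}\omega)=0,
\]
invoking Lemma~\ref{lambda0omega} and (\ref{sec4.2:eqn_presymp}) exactly as you do. Your additional remarks on tangency of $\mathbf{v}_\xi$ to $\mathcal{D}_M$ and the optional direct computation are supplementary but not needed to match the paper.
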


\begin{proof}~~~
Since $\rho(\mathbf{v}_{\xi}(A))\,=\,\rho({\rm d}_A\xi\oplus \omega_A({\rm d}_A\xi))={\rm d}_A\xi$,  Lemma \ref{lambda0omega} and the equation (\ref{sec4.2:eqn_presymp}) imply  
\[
\mathcal{L}_{\mathbf{v}_{\xi}(A)}\,\Lambda_0\, = \mathcal{L}_{{\rm d}_A\xi\oplus \omega_A(d_A\xi)}\,(\,\rho^*\omega)=\rho^{\ast}(\mathcal{L}_{{\rm d}_A\xi}\omega)=0.
\]
\end{proof}


\end{document}